\title{On four-rich points defined by pencils}
\author{Michalis Kokkinos and Audie Warren}
\newcommand{\R}{\mathbb{R}}
\newcommand{\C}{\mathbb{C}}
\newtheorem*{theorem*}{Theorem}
\newcommand{\cL}{\mathcal{L}}
\newcommand{\cC}{\mathcal{C}}
\newtheorem{lemma}{Lemma}
\newtheorem{theorem}{Theorem}
\newtheorem{corollary}{Corollary}
\newtheorem{construction}{Construction}
\begin{document}
 \maketitle

\begin{abstract}
    In this paper we study the number of four-rich points  defined by pencils of certain algebraic objects. Our main result concerns the number of four-rich points defined by four sheaves of planes; under certain non-degeneracy conditions, we prove that four sheaves of $n$ planes in $\mathbb P^3$ determine at most $O(n^{8/3})$ four-rich points. We prove this using the four dimensional Elekes-Szab\'{o} theorem. Using the same method, we prove an upper bound on the number of four-rich points determined by four sets of concentric spheres in $\C^3$. Furthermore, using the same technique with the 3-d Elekes-Szab\'{o} theorem, one can prove upper bounds on four-rich points determined by various configurations of lines/circles in the plane $\C^2$; we give one such example, involving two pencils of lines and two pencils of concentric circles in $\C^2$.
\end{abstract}

\section{Introduction}
In this paper, we study the intersection properties of certain families of objects. Specifically, we give various upper bounds for the number of four-rich points defined by finite subsets of one-dimensional families of algebraic objects. An example of such a result is given in \cite{RocheNewton2018ImprovedBF}, where the authors show that given four sets of concurrent lines in $\mathbb R^2$, where each set has size $n$ and the centers are not all collinear, the number of points which lie on a line from each family (i.e. four-rich points) is of size $O(n^{11/6})$. The concept here is that apart from some degenerate situations (e.g. collinear/coplanar points), such configurations cannot define many four-rich points. Our main result concerns the three dimensional version of this problem; we show that four \textit{sheaves of $n$ planes} in complex projective space $\mathbb P^3$ define few four rich points, under certain non-degeneracy conditions. We recall that a sheaf of planes is a set of planes which all contain a common line, and we call this common line the \textit{axial line} of the sheaf. If the four axial lines of the sheaves are pairwise skew, and the sheaves do not all contain a common transversal line of all axial lines, we prove that we do indeed get few four-rich points. In \Cref{sec:sheafconstruction} we give a construction to show that such restrictions are necessary.

\begin{theorem}\label{thm:sheavesofplanes}
    Let $\Pi_1,\Pi_2,\Pi_3,\Pi_4$ be four sheaves of planes in $\mathbb P^3$, each of size $n$. Suppose that the axial lines of the sheaves are pairwise skew, and that no transversal line of the axial lines is contained in a plane from each sheaf. Then the number of four rich points determined by these sheaves is $O(n^{8/3})$.
\end{theorem}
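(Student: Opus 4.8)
The plan is to encode four-rich points as the solution set of a single polynomial equation in four variables and then invoke the four-dimensional Elekes--Szab\'o theorem. First I would parametrise each sheaf: the planes of $\Pi_i$ are exactly the planes containing the axial line $\ell_i$, so they form a pencil naturally identified with $\mathbb{P}^1$. Projection of $\mathbb{P}^3$ away from $\ell_i$ gives a dominant rational map $t_i\colon \mathbb{P}^3 \dashrightarrow \mathbb{P}^1$ whose fibres are precisely the planes of the sheaf; concretely, a point $p$ off $\ell_i$ lies on the unique plane $\mathrm{span}(\ell_i,p)$, and $t_i(p)$ records which plane this is. Writing $A_i \subseteq \mathbb{P}^1$ for the set of $n$ parameters picked out by the chosen planes of $\Pi_i$, a point $p$ (off the axial lines) is four-rich precisely when $t_i(p) \in A_i$ for all $i$. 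The finitely many four-rich points lying on some axial line contribute only $O(n)$ and may be discarded.

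Next I would produce the constraint hypersurface. Since $\ell_1,\ell_2,\ell_3$ are pairwise skew, three planes, one through each, meet in a single point, so the combined map $(t_1,t_2,t_3)\colon \mathbb{P}^3 \dashrightarrow (\mathbb{P}^1)^3$ is birational. Hence $t_4$ is a rational function $\Phi(t_1,t_2,t_3)$ of the first three coordinates, and the image of $(t_1,t_2,t_3,t_4)$ is an irreducible three-dimensional hypersurface $V = \{F(x_1,x_2,x_3,x_4)=0\} \subseteq \C^4$. The map sending a four-rich point $p$ to its parameter quadruple is injective, so the number of four-rich points is at most $|V \cap (A_1 \times A_2 \times A_3 \times A_4)|$: if two four-rich points had the same quadruple they would lie on a common line $m \subseteq P_1 \cap P_2 \cap P_3 \cap P_4$, and since $m$ and $\ell_i$ are coplanar (both lie in $P_i$) they meet, so $m$ would be a transversal of all four axial lines contained in a plane of every sheaf --- exactly the configuration excluded by hypothesis.

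It now remains to bound $|V \cap (A_1 \times \cdots \times A_4)|$, for which I would apply the four-dimensional Elekes--Szab\'o theorem: it yields the desired $O(n^{8/3})$ unless $V$ is degenerate, i.e. unless $V$ is a cylinder over a lower-dimensional variety (so that $F$ really depends on fewer coordinates) or $V$ is in group-coordinate special form. The skewness of the axial lines rules out the cylinder case: if, say, $t_4$ did not depend on $t_3$, then every transversal of $\ell_1,\ell_2$ (the fibres of $(t_1,t_2)$) would have to meet $\ell_4$, which fails for three pairwise skew lines.

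The main obstacle, as I see it, is excluding the group-coordinate special form, and this is where the remaining hypothesis must be spent. My plan is to slice by the fourth coordinate: fixing $t_4 = \tau$ restricts attention to a plane $P_4(\tau) \cong \mathbb{P}^2$ of the fourth sheaf, inside which the pencils $\Pi_1,\Pi_2,\Pi_3$ cut out three pencils of lines concurrent at the points $c_i(\tau) = \ell_i \cap P_4(\tau)$, so that four-rich points on $P_4(\tau)$ become three-rich points of these pencils. The special form for $V$ should force the three-dimensional special form on the slices, which for concurrent pencils of lines is exactly the collinearity of the centres $c_1(\tau),c_2(\tau),c_3(\tau)$. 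But a line through three collinear centres lies in $P_4(\tau)$, hence meets $\ell_4$, and meets $\ell_1,\ell_2,\ell_3$ by construction; it is therefore a transversal of all four axial lines contained in a plane of each sheaf, contradicting the hypothesis. Making the implication ``special form for $V$ implies collinear centres on every slice'' precise --- rather than merely heuristic --- is the step I expect to require the most care.
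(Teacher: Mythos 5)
Your skeleton matches the paper's: parametrise each sheaf by one coordinate, eliminate to a single irreducible relation $F(t_1,t_2,t_3,t_4)=0$, use the transversal hypothesis to make the point-to-quadruple map injective (your injectivity argument is exactly the paper's and is correct), and apply \Cref{thm:4dElekesSzabo}. The gap is in your plan for excluding degeneracy, and it is fatal as stated. Your claimed dichotomy for the slices is false: three concurrent pencils of lines in a plane are Elekes--Szab\'o degenerate \emph{regardless} of whether the centres are collinear. For instance, with centres $(0,0),(1,0),(0,1)$ and lines $y=t_1x$, $y=t_2(x-1)$, $y=1+t_3x$, elimination gives $t_1t_2=t_2-t_1+t_3t_2$, i.e.
\[
\log(1+t_3)=\log t_1+\log\frac{1+t_2}{t_2},
\]
which is precisely the additive special form of \Cref{thm:SolymosiZahl} (the collinear case gives $1/t_3=2/t_2-1/t_1$, also degenerate). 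This multiplicative structure, coming from perspectivities, is exactly why \cite{RocheNewton2018ImprovedBF} needs four pencils of lines rather than three. Consequently every slice $t_4=\tau$ of your hypersurface is three-dimensionally degenerate, slice degeneracy carries no information, and no collinearity of the centres $c_1(\tau),c_2(\tau),c_3(\tau)$ --- hence no transversal --- can be extracted from it. The step you flagged as needing the most care is in fact the step that fails.

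There is also a structural reason the hypothesis cannot be spent where you want to spend it. Over $\C$ a transversal of all four axial lines always exists ($l_4$ always meets the regulus of $l_1,l_2,l_3$; this is the paper's own construction in \Cref{sec:sheafconstruction}), and the hypothesis only forbids such a transversal from lying in one of the $n$ \emph{chosen} planes of each finite sheaf. Degeneracy of $F$ is a property of the variety $Z(F)$ alone, independent of the finite sets $A_i$, so a hypothesis about the chosen planes can never rule it out; it can only control the fibres of the parametrisation map, which is the only place the paper uses it. The paper instead proves non-degeneracy \emph{unconditionally} for pairwise skew axial lines: it normalises $l_1,l_2,l_3$, uses the transformations $M_{a,b,c}$ to place $l_4$ either in the plane $X+Y+Z=0$ or, when every such matrix is singular, on a common regulus with $l_1,l_2,l_3$, and in each case verifies the explicit derivative criterion of \Cref{lem:degentest4d} for the resulting one- or two-parameter family of polynomials (e.g.\ $\frac{\partial}{\partial y}(f_z/f_w)=\frac{2(w-y)(w(z-1)-1)}{\alpha(y(z-1)-1)^3}$ in the regulus case). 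Your proof needs an analogous unconditional computation, or some other genuinely four-dimensional argument; the slicing route cannot be repaired.
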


 Secondly, we consider the number of four-rich points which can be defined by four sets of concentric spheres in $\mathbb C^3$.

\begin{theorem}\label{thm:spheres}
    Let $\mathcal S_1, \mathcal S_2, \mathcal S_3, \mathcal S_4$ be four sets of $n$ concentric spheres in $\mathbb C^3$, whose centres do not lie on a common plane, and no three centres are collinear. Then the number of points in $\mathbb C^3$ which lie in a sphere from each family is $O(n^{8/3})$.
\end{theorem}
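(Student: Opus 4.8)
The plan is to reduce the count to a point-count on an algebraic hypersurface in $\C^4$ and then invoke the four-dimensional Elekes--Szab\'o theorem, exactly as in the proof of \Cref{thm:sheavesofplanes}. Write $c_1,\dots,c_4\in\C^3$ for the four centres and let $R_i\subseteq\C$ be the set of squared radii occurring in $\mathcal S_i$, so $|R_i|=n$. A point $p$ lies on a sphere of $\mathcal S_i$ exactly when $t_i:=\langle p-c_i,\,p-c_i\rangle\in R_i$, where $\langle\cdot,\cdot\rangle$ is the standard (non-Hermitian) bilinear form. Since $t_i=\langle p,p\rangle-2\langle c_i,p\rangle+\langle c_i,c_i\rangle$, subtracting pairs of the equations $t_i=\langle p-c_i,\,p-c_i\rangle$ yields the three radical-plane relations $2\langle c_i-c_1,\,p\rangle=(\langle c_i,c_i\rangle-\langle c_1,c_1\rangle)-(t_i-t_1)$ for $i=2,3,4$. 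Because the centres are not coplanar, the vectors $c_2-c_1,c_3-c_1,c_4-c_1$ are linearly independent, so these relations determine $p$ uniquely as an affine-linear function $p=p(t_1,\dots,t_4)$. Substituting back into $t_1=\langle p-c_1,\,p-c_1\rangle$ produces a single quadratic relation $F(t_1,t_2,t_3,t_4)=0$, and the map $\phi\colon p\mapsto(t_1,\dots,t_4)$ is a bijection from $\C^3$ onto the quadric $V:=\{F=0\}$. In particular the number of four-rich points equals $|V\cap(R_1\times R_2\times R_3\times R_4)|$.

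Next I would check that $V$ meets the hypotheses of the four-dimensional Elekes--Szab\'o theorem. The quadratic part of $F$ equals $-\langle Bt,Bt\rangle$, where $B\colon\C^4\to\C^3$ is the linear part of $t\mapsto p(t)$; its columns are, up to sign, the dual basis to the vectors $2(c_i-c_1)$, together with $b_1=-(b_2+b_3+b_4)$. Non-coplanarity gives $\operatorname{rank}B=3$, so the quadratic form has rank $3$ and $F$ is irreducible (a reducible quadric has quadratic part of rank at most two); moreover $F$ depends on all four variables, so $V$ is not a cylinder, and its projections to any three coordinates are dominant. Crucially, non-coplanarity is also what prevents $V$ from collapsing into a hyperplane: any affine dependence $\sum\mu_i c_i=0$, $\sum\mu_i=0$ among the centres would force the linear relation $\sum\mu_i t_i=\mathrm{const}$ on $V$, and three collinear centres would force such a relation as well; in either case $V$ would lie in a hyperplane and could support up to $\sim n^3$ grid points. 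The two non-degeneracy hypotheses are designed precisely to exclude these collapses and to guarantee that $V$ is a genuine irreducible quadric.

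With $V$ an irreducible quadric satisfying the structural hypotheses, the four-dimensional Elekes--Szab\'o theorem yields a dichotomy: either $|V\cap(R_1\times\cdots\times R_4)|=O(n^{8/3})$, which is the desired bound, or $V$ is \emph{special}, i.e.\ after coordinatewise finite reparametrisations it is a coset hypersurface $\varphi_1(t_1)\oplus\varphi_2(t_2)\oplus\varphi_3(t_3)\oplus\varphi_4(t_4)=0$ over a one-dimensional algebraic group, namely $(\C,+)$, $(\C^{*},\times)$, or an elliptic curve. The entire content of the argument is to rule out the second alternative under the non-degeneracy hypotheses, and this is the step I expect to be the main obstacle: a perfectly irreducible quadric \emph{can} be special (for instance $t_it_j-t_kt_l=0$ is multiplicatively special), so non-coplanarity alone does not settle it, and one must extract extra information from the finer geometry of the centres.

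To rule out specialness I would argue directly on the explicit quadric. Solving $F=0$ for one variable, say $t_4$, gives $t_4=\Psi(t_1,t_2,t_3)$ with $\Psi$ of the form $(\text{affine}\pm\sqrt{\Delta})/\text{const}$ for a quadratic discriminant $\Delta$, and I would apply the Elekes--R\'onyai-type criterion that $\Psi$ admits an additive (or, after a logarithm, multiplicative) decomposition precisely when a certain differential expression in $\Psi$ vanishes identically. The plan is to show that this expression is a rational function of the entries of the Gram matrix $G=[\langle c_i-c_1,\,c_j-c_1\rangle]_{i,j}$ --- since every $b_i$, and hence $F$, is built from $G^{-1}$ --- and that it vanishes only when $G$ degenerates in a way corresponding to the centres being coplanar or to three of them being collinear. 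Equivalently, one can treat the three groups separately: the additive case forces the off-diagonal structure of $G^{-1}$ to vanish, the multiplicative case forces $F$ to be of the doubly-ruled hyperbolic type $\sim t_it_j-t_kt_l$ after reparametrisation, and the elliptic case cannot occur because the relevant curves on a quadric are rational. In each case the resulting algebraic identity among the $\langle c_i-c_1,\,c_j-c_1\rangle$ is incompatible with the assumed general position of the centres. Carrying out this translation from the analytic non-special condition to a clean geometric statement about the mutual positions of the centres, uniformly over the three possible groups, is the crux of the proof; once it is complete, the Elekes--Szab\'o bound $O(n^{8/3})$ follows at once.
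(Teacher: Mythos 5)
Your reduction is sound and matches the paper's: identify spheres by squared radii $t_i\in R_i$, eliminate the spatial point to a single quadric $F(t_1,t_2,t_3,t_4)=0$, and apply the four-dimensional Elekes--Szab\'o theorem (\Cref{thm:4dElekesSzabo}) to $Z(F)\cap R_1\times R_2\times R_3\times R_4$. Two of your steps are in fact cleaner than the paper's: the radical-plane argument shows the correspondence $p\mapsto(t_1,\dots,t_4)$ is a genuine bijection onto the quadric (the paper only bounds the fibres by a constant, via the fact that four spheres in general position meet in at most two points), and your rank argument for irreducibility (the quadratic part is $\langle Bt,Bt\rangle$ with $\operatorname{rank}B=3$, while a product of two affine-linear forms has quadratic part of rank at most $2$) replaces the paper's Mathematica coefficient comparison, which finds $F$ reducible exactly when $be=0$ in its normalised coordinates.

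The genuine gap is the step you yourself label the crux: ruling out the degenerate case of the dichotomy. Everything there is conditional (``I would argue'', ``the plan is to show'') and no computation is performed, so the proof is incomplete precisely where the content lies --- as you correctly observe, irreducibility alone cannot suffice, since quadrics such as $t_it_j-t_kt_l=0$ are degenerate. Moreover, your framing of the degenerate case as a coset hypersurface over one of the three one-dimensional groups $(\C,+)$, $(\C^{*},\times)$, or an elliptic curve does not match the theorem you have available: in \Cref{thm:4dElekesSzabo} the degenerate alternative is a \emph{local analytic} additive decomposition $\phi_1(x)+\phi_2(y)+\phi_3(z)+\phi_4(w)=0$, which already subsumes the multiplicative and elliptic cases (those groups are locally analytically isomorphic to $(\C,+)$), so the three-way case split is both unavailable as a cited theorem in four variables and unnecessary. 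There is likewise no off-the-shelf four-variable Elekes--R\'onyai criterion to invoke; the paper must first prove one (\Cref{lem:degentest4d}), which reduces non-degeneracy to checking that, after writing $t_4=f(t_1,t_2,t_3)$ on a Zariski open set, an explicit expression such as
$$\frac{\partial}{\partial t_1}\left(\frac{f_{t_2}}{f_{t_3}}\right)$$
does not vanish identically, and then verifies by direct computation on the explicit quadric that this holds exactly when $b,e\neq 0$, i.e.\ under the general-position hypotheses. Your proposed alternative --- translating vanishing of some differential expression into an identity on the Gram matrix of the centres --- is plausible in outline but is exactly the missing work: without carrying out that computation (or the paper's), the $O(n^{8/3})$ bound is not established.
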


As a corollary of \Cref{thm:spheres}, we obtain the following Erd\H{o}s pinned distances type result. In the following, we define $D(q,P)$ for $q\in \mathbb R^3$ and $P \subseteq \mathbb R^3$ to be the set of distances defined between $q$ and $P$.

\begin{corollary}\label{thm:distances}
    Let $P \subseteq \mathbb R^3$ be a finite set of points, and let $q_1,q_2,q_3,q_4 \in \mathbb R^3$ be four non-coplanar points, with no three collinear. Then we have
    $$\max_{i} D(q_i,P) \gg |P|^{3/8}.$$
\end{corollary}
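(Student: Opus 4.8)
The plan is to realize each point of $P$ as a four-rich point of a suitable configuration of concentric spheres and then invoke \Cref{thm:spheres}. For each index $i \in \{1,2,3,4\}$ I would form the family $\mathcal{S}_i$ of spheres centred at $q_i$ whose radii range over the distance set $D(q_i,P)$; concretely, $\mathcal{S}_i = \{\, S(q_i,r) : r \in D(q_i,P) \,\}$, where $S(q_i,r)$ denotes the sphere of radius $r$ about $q_i$. By construction $|\mathcal{S}_i| = |D(q_i,P)|$, and every $p \in P$ lies on exactly one sphere of $\mathcal{S}_i$, namely the one of radius $|q_i - p|$.

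Set $n := \max_i |D(q_i,P)|$, which is the quantity denoted $\max_i D(q_i,P)$ in the statement. Each family then has at most $n$ spheres, and I would pad the smaller families with arbitrary additional concentric spheres so that each $\mathcal{S}_i$ has exactly $n$ members; this only increases the number of four-rich points and leaves the centres untouched. Since every $p \in P$ lies on a sphere from each of the four families, each such $p$ is a four-rich point, and distinct points of $P$ are distinct points of $\mathbb{R}^3 \subseteq \mathbb{C}^3$, hence distinct four-rich points. Viewing the real spheres inside $\mathbb{C}^3$ by complexifying their defining quadratic equations, the points of $P$ form a subset of the complex four-rich points determined by $\mathcal{S}_1,\dots,\mathcal{S}_4$.

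The centres $q_1,\dots,q_4$ are non-coplanar and have no three collinear, which is exactly the non-degeneracy hypothesis of \Cref{thm:spheres}. Applying that theorem gives that the number of four-rich points is $O(n^{8/3})$, and therefore $|P| = O(n^{8/3})$. Rearranging yields $n \gg |P|^{3/8}$, i.e. $\max_i |D(q_i,P)| \gg |P|^{3/8}$, as claimed.

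Since everything reduces to the already-established \Cref{thm:spheres}, I do not anticipate a genuine obstacle here; the only points requiring a little care are the bookkeeping for families of unequal size (handled by padding) and the verification that passing from $\mathbb{R}^3$ to $\mathbb{C}^3$ preserves each point of $P$ as a four-rich point. A zero radius, which occurs only if some $q_i \in P$, contributes at most one degenerate sphere per family and does not affect the asymptotics.
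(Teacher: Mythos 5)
Your proposal is correct and matches the paper's own argument: both cover $P$ by concentric spheres around each $q_i$, pad the smaller families to the common size $\max_i|D(q_i,P)|$, observe that $P$ is contained in the resulting set of four-rich points, and invoke \Cref{thm:spheres} to get $|P| \ll n^{8/3}$. Your extra remarks on complexification and zero radii are fine but not load-bearing; no further changes needed.
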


Note that the trivial lower bound is $|P|^{1/3}$, which is tight if three of the points are collinear. This is shown by a simple construction given in \Cref{sec:construction}.

In the last section, we give an upper bound on the number of four-rich points defined by two pencils of lines and two pencils of concentric circles in $\C^2$. Research in this direction was initiated by the seminal paper of Elekes and Szab\'{o} \cite{ElekesSzabo}. Their main result, listed as \Cref{thm:SolymosiZahl} below, was applied to derive an upper bound for the number of intersection points between circles belonging to three families of concentric circles, see \cite[Theorem 32]{ElekesSzabo}. Parallel to \Cref{thm:spheres} and \Cref{thm:distances}, that result can also be seen as a bound on the number of distinct distances between a finite point set in the plane and three other non collinear points. We prove that for two pencils of lines and two pencils of concentric circles of size $n$, the number of four-rich points is $O(n^{12/7})$. We defer the statement of this result to the last section.

\subsection{The Elekes-Szab\'{o} problem}

The Elekes-Szab\'{o} problem \cite{ElekesSzabo} concerns itself with the number of intersection points between an algebraic surface and a finite Cartesian product. The three-dimensional version, concerning the number of intersections between the zero set of a polynomial $F(x,y,z)$ and a finite set of the form $A \times B \times C$, has seen a wide range of applications to combinatorial problems, see for instance \cite{RazSharirSolymosi}, \cite{Roche-Newton-Wong} and the survey paper \cite{deZeeuwsurvey}. We state the most recent version of the Elekes-Szabo theorem due to Solymosi and Zahl \cite{SolymosiZahl}, which builds upon the work of Raz, Sharir, de Zeeuw \cite{RazSharirdeZeeuw}.
\begin{theorem}[Solymosi, Zahl \cite{SolymosiZahl}]\label{thm:SolymosiZahl}
    Let $F\in\C[x,y,z]$ be a degree $d$ irreducible polynomial with each of $F_x,F_y,F_z$ not identically zero. Then one of the two following statements hold.
    \begin{itemize}
        \item For all finite sets $A,B,C\subseteq\C$ with $|A|=|B|=|C|=n$, we have \[|V(F)\cap A\times B\times C|\ll_{d}n^{12/7}.\]
        \item There exists a one-dimensional subvariety $Z_0\subseteq Z(F),$ such that for every $p\in Z(F)\backslash Z_0$, there are open sets $D_1, D_2,D_3$ with $p \in D_1 \times D_2 \times D_3$ and bijective analytic functions $\phi_i:D_i\rightarrow\C,$ with analytic inverses, such that for every $(x,y,z)\in D_1\times D_2\times D_3$ we have\[(x,y,z)\in Z(F)\quad\text{if and only if}\quad\phi_1(x)+\phi_2(y)+\phi_3(z)=0.\]
    \end{itemize}
\end{theorem}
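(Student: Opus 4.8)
The plan is to establish the stated dichotomy by the energy-plus-incidence strategy of Raz, Sharir and de Zeeuw, upgraded with the sharper incidence input of Solymosi and Zahl. Set $M = |V(F)\cap A\times B\times C|$ and, for each $a\in A$, let $r(a)$ be the number of pairs $(b,c)\in B\times C$ with $F(a,b,c)=0$, so that $M=\sum_a r(a)$. Since $F_z\not\equiv 0$ and $F$ is irreducible, only $O(d)$ values of $a$ can make $F(a,b,z)$ vanish identically, so after discarding a negligible set every fibre is genuinely $0$-dimensional. Cauchy--Schwarz then gives $M^2\le n\sum_a r(a)^2$, where $\sum_a r(a)^2$ counts quintuples $(a;b,c;b',c')$ with $F(a,b,c)=F(a,b',c')=0$.

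Next I would eliminate the pivot variable $a$: for each fixed $(b,b')\in B\times B$, the resultant $\mathrm{Res}_a\big(F(a,b,c),F(a,b',c')\big)$ cuts out a plane curve $\Gamma_{b,b'}\subseteq\C^2_{c,c'}$ of degree $O(d^2)$, and every relevant quintuple yields a point of $\Gamma_{b,b'}\cap(C\times C)$, each such point lifting back to $O(d)$ values of $a$. Hence $\sum_a r(a)^2\ll I(C\times C,\cC)$, where $\cC=\{\Gamma_{b,b'}\}$ is a family of at most $n^2$ curves incident to a point set of size $n^2$ (the diagonal $b=b'$ contributes only $O(n^2)$). The routine route is now to check that $\cC$ has two degrees of freedom with bounded multiplicity and invoke the Pach--Sharir bound $I\ll (n^2)^{2/3}(n^2)^{2/3}+n^2=n^{8/3}$, which already yields $M\ll n^{11/6}$. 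The actual content of the theorem lies in two places: (i) showing that the degree-of-freedom hypothesis can fail only in a structured way, and (ii) replacing Pach--Sharir by a stronger estimate to reach $n^{12/7}$.

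For (i), the family $\cC$ fails to be ``$2$-rigid'' precisely when infinitely many of the curves $\Gamma_{b,b'}$ share a common component, which is an algebraic condition on $F$. The heart of the argument is to show that this degeneracy forces $Z(F)$ to carry a local one-parameter group law: after passing to suitable local analytic coordinates one recognises the level sets of $F$ as the graph of an associative partial operation, which --- by the classical coordinatisation of one-dimensional algebraic groups, or equivalently by solving the resulting functional/differential equation \`a la Elekes--R\'onyai --- must be equivalent to addition, producing the $\phi_i$ with $(x,y,z)\in Z(F)$ iff $\phi_1(x)+\phi_2(y)+\phi_3(z)=0$ off a one-dimensional exceptional set $Z_0$. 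I expect this structural step to be the main obstacle: it requires translating coincidences among the $\Gamma_{b,b'}$ into the vanishing of an intrinsic differential invariant of $F$, integrating that relation, and carefully excising the low-dimensional bad locus where none of this is controlled.

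For (ii), the improvement from $11/6$ to $12/7$ comes from refusing to treat $\cC$ as a black-box curve family. The curves $\Gamma_{b,b'}$ are themselves cut out by a single polynomial over the two-dimensional parameter space $B\times B$, so in the balanced regime $|C\times C|\approx|\cC|\approx n^2$ one can run a constant-degree polynomial partitioning tuned to this algebraic family (rather than the generic Pach--Sharir estimate), pushing the incidence count down to $I\ll n^{17/7}$ and hence, through $M^2\le nI$, to the final bound $M\ll n^{12/7}$; alternatively one may partition the three-dimensional incidence directly and avoid the Cauchy--Schwarz loss entirely. This quantitative refinement is the technical crux of the Solymosi--Zahl argument, with the structural dichotomy of step (i) serving precisely to guarantee that the partitioning hypotheses hold whenever the first alternative of the theorem is in force.
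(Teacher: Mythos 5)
First, a point of reference: the paper does not prove this statement at all --- it is quoted verbatim from Solymosi and Zahl \cite{SolymosiZahl} as an external input (the paper's own contributions only \emph{apply} it, in \Cref{thm:2l-2con}). So your attempt has to be measured against the original Solymosi--Zahl argument, and against that standard it has a genuine gap. What you have written is a correct reconstruction of the \emph{Raz--Sharir--de Zeeuw} skeleton: the Cauchy--Schwarz step $M^2 \le n\sum_a r(a)^2$, elimination of the pivot via $\mathrm{Res}_a$, and Pach--Sharir applied to the $n^2$ points $C\times C$ and $n^2$ curves $\Gamma_{b,b'}$ do yield $M \ll n^{11/6}$, and your description of how failure of the degrees-of-freedom condition forces the local additive form $\phi_1(x)+\phi_2(y)+\phi_3(z)=0$ is the right shape of the dichotomy --- though you explicitly leave that structural step as a black box (``I expect this structural step to be the main obstacle''), so even the $11/6$ version is a plan rather than a proof.

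The concrete failure is in step (ii), which is exactly where the content of the theorem lies. You assert that a polynomial partitioning ``tuned to this algebraic family'' improves the incidence count from $n^{8/3}$ to $n^{17/7}$, but no such bound is known, and none can hold for general two-degrees-of-freedom families: Szemer\'edi--Trotter is tight, so $n^2$ points and $n^2$ curves can genuinely achieve $\Theta(n^{8/3})$ incidences, and you give no mechanism by which the resultant curves $\Gamma_{b,b'}$ escape the extremal configurations. The actual Solymosi--Zahl improvement does not come from a stronger incidence theorem for the full family at all; it comes from a \emph{proximity} argument: one orders the points and restricts the energy/elimination count to \emph{nearby} pairs (roughly, each $c'$ is taken among the few nearest neighbours of $c$ on the relevant fibre), which shrinks the curve family from $\sim n^2$ to essentially linear size while retaining enough counting power, and it is this restricted incidence problem that yields the exponent $12/7$. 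This ordering-based device is real-analytic in nature, so the complex statement (which is what the paper uses, over $\C$) requires additional work identifying $\C$ with $\R^2$ --- an issue your sketch never engages with, since both of your proposed routes (partitioning the planar family, or ``partitioning the three-dimensional incidence directly'') are named but not executed. In short: your proposal recovers the known $n^{11/6}$ framework but does not contain, even in outline, the idea that actually produces $n^{12/7}$.
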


A polynomial $F$ which satisfies item two of \Cref{thm:SolymosiZahl} will be called \textit{degenerate}. In order to apply \Cref{thm:SolymosiZahl} we need a way to test whether polynomials are degenerate. This is provided by the following derivative test. This result can be found in various places in the literature, for instance \cite[Lemma 6]{Roche-Newton-Wong}, \cite[Lemma 33]{ElekesSzabo}, or the survey \cite{deZeeuwsurvey}. We state the variant over the complex numbers which uses rational functions, since this is all we will need in our application.

\begin{lemma}[Degeneracy Test]\label{lem:degentest2d}
    Suppose that $F(x,y,z) \in \mathbb C[x,y,z]$ is a degenerate polynomial. Let $f(x,y)$ be a rational function with $f_x$ and $f_y$ not identically zero, such that wherever $f$ is defined, we have
    $$z = f(x,y) \iff F(x,y,z)=0.$$
    Then there exists a non-empty open set $U \subseteq \C^2$ such that 
    $$\frac{\partial^2 (\ln|f_x/f_y|)}{\partial x \partial y}$$
is identically zero on $U$.    
\end{lemma}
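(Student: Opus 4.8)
The plan is to exploit the local additive structure that degeneracy forces on $Z(F)$ and to transport it onto the graph of $f$. Since $F$ is degenerate, the second alternative of \Cref{thm:SolymosiZahl} applies: there is a one-dimensional subvariety $Z_0 \subseteq Z(F)$ such that near any point of $Z(F)\setminus Z_0$ the surface is cut out by an equation of the form $\phi_1(x)+\phi_2(y)+\phi_3(z)=0$, where each $\phi_i$ is a biholomorphism onto its image. The first step is to choose a good base point. The graph of $f$ is a two-dimensional piece of $Z(F)$, so it cannot be contained in the one-dimensional set $Z_0$; moreover $f$ is rational with $f_x,f_y$ not identically zero, so away from a lower-dimensional bad set $f$ is analytic with $f_x,f_y$ defined. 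Picking a point $p=(x_0,y_0,f(x_0,y_0))\in Z(F)\setminus Z_0$ that avoids this bad set and passing to a small enough polydisc, we obtain a non-empty open set $U\subseteq\C^2$ on which $f$ is analytic and on which the identity
$$\phi_1(x)+\phi_2(y)+\phi_3\big(f(x,y)\big)=0$$
holds, since the graph of $f$ coincides with $Z(F)$ there.

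Next I would differentiate this identity separately in $x$ and in $y$. Writing $\psi = \phi_3'\big(f(x,y)\big)$, the chain rule gives the two relations $\phi_1'(x) + \psi\, f_x = 0$ and $\phi_2'(y) + \psi\, f_y = 0$ on $U$. Because each $\phi_i$ is a biholomorphism onto its image, its derivative $\phi_i'$ vanishes nowhere on the relevant domain; in particular $\psi=\phi_3'(f)$ is nonvanishing on $U$ (after shrinking $U$ if necessary), and the two relations then force $f_x$ and $f_y$ to be nonvanishing there as well. Dividing the first relation by the second eliminates $\psi$ and yields
$$\frac{f_x}{f_y} = \frac{\phi_1'(x)}{\phi_2'(y)}$$
on $U$; that is, $f_x/f_y$ factors as a function of $x$ alone times a function of $y$ alone.

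Finally I would take $\ln|\cdot|$ of both sides, which turns the quotient into a difference:
$$\ln\left|\frac{f_x}{f_y}\right| = \ln\big|\phi_1'(x)\big| - \ln\big|\phi_2'(y)\big|.$$
The right-hand side is a sum of a function of $x$ only and a function of $y$ only, so each term is annihilated by one of $\partial_x,\partial_y$ (note that $\ln|\phi_1'(x)|$ involves $x$ but no $y$, so $\partial_y$ kills it regardless of how one interprets the derivative). Hence the mixed partial $\partial^2(\ln|f_x/f_y|)/\partial x\,\partial y$ is identically zero on $U$, as required. The step I expect to require the most care is the first one: one must verify that a single non-empty open set $U$ can be chosen on which $f$ is analytic, the additive parametrization is valid (so that one stays off $Z_0$ throughout), and all the $\phi_i'$ are nonzero. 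This is a routine generic-point argument, since every obstruction—the locus where $f$ is undefined, the zero sets of $f_x$ and $f_y$, and the one-dimensional set $Z_0$—has strictly smaller dimension than the graph of $f$. Once the base point is set up correctly, the actual content of the lemma is the short computation in the last two steps.
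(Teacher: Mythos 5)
Your proposal is correct, and it is essentially the standard argument: the paper itself does not prove \Cref{lem:degentest2d} (it cites \cite{Roche-Newton-Wong}, \cite{ElekesSzabo}, and \cite{deZeeuwsurvey}), but your computation matches the paper's own proof of the four-dimensional analogue \Cref{lem:degentest4d}, differing only cosmetically in that you implicitly differentiate $\phi_1(x)+\phi_2(y)+\phi_3(f(x,y))=0$ while the paper first solves for $f$ via $\psi=\phi_4^{-1}(-\,\cdot\,)$ before differentiating; both routes yield $f_x/f_y=\phi_1'(x)/\phi_2'(y)$. Your generic-point setup (avoiding $Z_0$, the poles of $f$, and using $\phi_i'\neq 0$ to get nonvanishing of $f_x,f_y$) and your handling of the $\ln|\cdot|$ step are sound.
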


We prove Theorems \ref{thm:sheavesofplanes} and \ref{thm:spheres} by using the four-dimensional Elekes-Szabo result of Raz, Sharir, and de Zeeuw \cite{ES4d}, given as \Cref{thm:4dElekesSzabo} below. 

\begin{theorem}[Raz, Sharir, de Zeeuw \cite{ES4d}] \label{thm:4dElekesSzabo}
    Let $F \in \mathbb C[x,y,z,w]$ be an irreducible polynomial of constant degree, with no partial derivative identically zero. Then one of the following two statements holds.
    \begin{enumerate}
        \item For any finite sets $A,B,C,D \subseteq \mathbb C$, we have
        \begin{multline*}
            |Z(F) \cap A \times B \times C \times D| \ll (|A||B||C||D|)^{2/3}+|A||B|+|A||C|\\+|A||D|+|B||C|+|B||D|+|C||D|.
        \end{multline*}    
        \item There exists a two-dimensional subvariety $Z_0 \subseteq Z(F)$ such that for all $p \in Z(F) \setminus Z_0$, there exist open sets $D_i \subseteq \mathbb C$ and bijective analytic functions $\phi_i: D_i \rightarrow \mathbb C$ with analytic inverses, for $i=1,2,3,4$, such that $p \in D_1 \times D_2 \times D_3 \times D_4$, and for all $(x,y,z,w) \in D_1 \times D_2 \times D_3 \times D_4$ we have
        $$F(x,y,z,w) = 0 \iff \phi_1(x) + \phi_2(y) + \phi_3(z) + \phi_4(w) =0.$$
    \end{enumerate}
\end{theorem}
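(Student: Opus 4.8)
The plan is to treat this as an incidence problem and feed it into a planar point--curve incidence bound, with the dichotomy emerging from whether the associated family of curves is ``generic'' or forced to be algebraically special. Write $M = |Z(F) \cap A \times B \times C \times D|$, and consider the main case $|A|=|B|=|C|=|D|=n$. Since $F$ has no identically vanishing partial derivative, for a generic pair $(c,d) \in C \times D$ the slice $\gamma_{c,d} := \{(x,y) : F(x,y,c,d)=0\}$ is a genuine plane curve of bounded degree rather than a union of vertical or horizontal lines. Then
$$M = \sum_{(c,d) \in C \times D} |\gamma_{c,d} \cap (A \times B)| = I\bigl(A \times B,\ \Gamma\bigr),$$
where $\Gamma = \{\gamma_{c,d} : (c,d) \in C\times D\}$ is a family of at most $|C||D|$ curves and $I(\cdot,\cdot)$ counts incidences.

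First I would invoke a point--curve incidence bound over $\C$ (the complex Pach--Sharir / Szemer\'edi--Trotter bound, obtained via polynomial partitioning in the spirit of Solymosi--Tao, viewing $\C^2$ as $\R^4$ and the curves as real surfaces). For a family of bounded-degree curves with \emph{two degrees of freedom} and bounded multiplicity type this gives
$$I(P,\Gamma) \ll |P|^{2/3}|\Gamma|^{2/3} + |P| + |\Gamma|.$$
Substituting $|P| = |A||B|$ and $|\Gamma| = |C||D|$ produces the main term $(|A||B||C||D|)^{2/3}$ together with threshold terms; the remaining pairwise products in the statement appear as lower-order terms, arising from the three ways of splitting $\{x,y,z,w\}$ into two pairs and from the trivial contributions of degenerate slices. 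This already yields item (1), \emph{provided} the hypotheses of the incidence bound hold.

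The crux is therefore these hypotheses, and this is where the dichotomy is born. Two things can go wrong: distinct slices $\gamma_{c,d}$ and $\gamma_{c',d'}$ may share a common component (failure of bounded intersection), or a fixed pair of points $(a,b),(a',b')$ may lie on a positive-dimensional set of curves $\gamma_{c,d}$ (failure of two degrees of freedom). The heart of the proof is to show that if either failure occurs on a large scale, then $Z(F)$ carries an internal ``group-like'' structure: the curves $\gamma_{c,d}$ form a one-parameter algebraic family, any two members of which meet in many points, which forces them to be level sets $\{\phi_1(x) + \phi_2(y) = t\}$ of a single additive expression. A symmetric analysis in the remaining coordinates then promotes this to the full four-fold relation $\phi_1(x)+\phi_2(y)+\phi_3(z)+\phi_4(w)=0$, after discarding an exceptional two-dimensional subvariety $Z_0$ and passing, via the implicit function theorem, to open sets on which the $\phi_i$ are bijective and analytic with analytic inverses. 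An equivalent, more computational route to detecting this case is a multivariate analogue of the derivative test recorded in \Cref{lem:degentest2d}.

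I expect the main obstacle to be exactly this structural step: converting the \emph{combinatorial} failure of the degrees-of-freedom and multiplicity conditions into the \emph{global, analytic} additive decomposition, and in particular upgrading the ``$2+2$'' splitting forced by slicing into genuine four-fold additivity symmetric in all variables. This requires controlling the monodromy of the family of curves, ruling out intermediate ``partially special'' behaviour, and carefully excising $Z_0$ so that the functions $\phi_i$ are honestly defined and invertible. Secondary technical points --- establishing a clean complex point--curve incidence bound with explicit control on the multiplicity type, and verifying that generic slices are irreducible plane curves of the expected degree --- are routine by comparison, but still need to be handled to make the incidence input legitimate.
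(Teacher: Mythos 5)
This statement is quoted verbatim from Raz, Sharir and de Zeeuw \cite{ES4d}; the paper itself contains no proof of \Cref{thm:4dElekesSzabo} and uses it as a black box, so there is no internal proof to compare against. Measured against the actual proof in the literature, your outline does capture the right general strategy: slice $Z(F)$ into a two-parameter family of bounded-degree plane curves, feed the points $A\times B$ and curves indexed by $C\times D$ into a Pach--Sharir-type incidence bound for curves with two degrees of freedom to obtain the $(|A||B||C||D|)^{2/3}$ main term, and derive the additive normal form of item (2) when the incidence hypotheses fail.

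However, as a proof the proposal has a genuine gap, and it is exactly the step you flag: the structural half is asserted, not proved, and it constitutes essentially the entire content of \cite{ES4d}. Concretely, you need to show that if slices $\gamma_{c,d}$ share components along a positive-dimensional set of parameters, or if two points lie on superconstantly many slices, then $F$ admits --- off a two-dimensional exceptional subvariety $Z_0$ --- the local analytic decomposition $\phi_1(x)+\phi_2(y)+\phi_3(z)+\phi_4(w)=0$. The jump from ``the curves are level sets of $\phi_1(x)+\phi_2(y)$'' to four-fold additivity is not automatic: a relation of the shape $\varphi(x,y)+\psi(z,w)=0$, with $\varphi$ and $\psi$ not themselves additively split, is precisely the intermediate ``partially special'' case your sketch names but does not exclude, and ruling it out (together with controlling the monodromy of the curve family and excising $Z_0$ so the $\phi_i$ are honest analytic bijections) is where the real work lies. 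Two secondary but non-negligible points: the clean, $\epsilon$-loss-free incidence bound $I(P,\Gamma)\ll |P|^{2/3}|\Gamma|^{2/3}+|P|+|\Gamma|$ for constant-degree curves with two degrees of freedom over $\C$ is itself a substantial theorem (viewing $\C^2$ as $\R^4$ turns the curves into two-dimensional surfaces, where Szemer\'edi--Trotter-type statements require separate justification, and Solymosi--Tao-style partitioning loses an $\epsilon$ in the exponent); and you must either establish bounded multiplicity of the parametrization $(c,d)\mapsto\gamma_{c,d}$ or route its failure into the degenerate case. In short: correct roadmap, but the heart of the theorem --- converting combinatorial degeneracy of the curve family into the global analytic additive structure --- is missing.
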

Informally, the second case of \Cref{thm:4dElekesSzabo} means that (locally) the zero set of $F$ can be represented by a sum of four univariate functions. If this case occurs, we again call the polynomial \textit{degenerate}. In order to use \Cref{thm:4dElekesSzabo}, we begin by proving a Lemma which gives a sufficient condition for a polynomial to be non-degenerate. It is of note that this upcoming degeneracy test appears to be simpler than the analogue \Cref{lem:degentest2d}.

\section{A 4-d Elekes-Szab\'{o} degeneracy test}
In this section we prove the following degeneracy test lemma. The proof is analogous to the three-dimensional case, see for instance \cite{Roche-Newton-Wong}.

\begin{lemma}\label{lem:degentest4d}
   Let $F(x,y,z,w) \in \mathbb C[x,y,z,w]$ be an irreducible polynomial with no partial derivative identically zero, such that we may rearrange the equation $F(x,y,z,w)=0$ to the form $w = f(x,y,z)$ with $f$ rational. If any of the expressions 
   $$\frac{\partial }{\partial z}\left( \frac{f_x}{f_y}\right), \quad \frac{\partial }{\partial x}\left( \frac{f_y}{f_z}\right), \quad \frac{\partial }{\partial y}\left( \frac{f_x}{f_z}\right)$$
   do not vanish on any non-empty open set $U \subseteq \C^3$, then $F(x,y,z,w)$ is non-degenerate.
\end{lemma}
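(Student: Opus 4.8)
The plan is to prove the contrapositive: assume $F$ is degenerate, and show that all three of the listed expressions must vanish on some non-empty open set. Since $F$ is degenerate, by item two of \Cref{thm:4dElekesSzabo} there is a two-dimensional subvariety $Z_0 \subseteq Z(F)$ and, away from it, local bijective analytic functions $\phi_1,\phi_2,\phi_3,\phi_4$ (with analytic inverses) so that on some open box $D_1\times D_2 \times D_3 \times D_4$ we have $F(x,y,z,w)=0 \iff \phi_1(x)+\phi_2(y)+\phi_3(z)+\phi_4(w)=0$. Shrinking the box if necessary, we may assume it avoids $Z_0$ and that $f$ is defined and analytic there. On this box the two descriptions of $Z(F)$ agree, so $w = f(x,y,z)$ is equivalent to $\phi_4(w) = -\phi_1(x)-\phi_2(y)-\phi_3(z)$. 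Applying the analytic inverse $\phi_4^{-1}$, I would write
\begin{equation*}
f(x,y,z) = \phi_4^{-1}\bigl(-\phi_1(x)-\phi_2(y)-\phi_3(z)\bigr)
\end{equation*}
as an identity of analytic functions on the open box.

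The key computation is then to differentiate this identity. Writing $\psi = \phi_4^{-1}$ and $s(x,y,z) = -\phi_1(x)-\phi_2(y)-\phi_3(z)$, the chain rule gives $f_x = -\psi'(s)\,\phi_1'(x)$, $f_y = -\psi'(s)\,\phi_2'(y)$, and $f_z = -\psi'(s)\,\phi_3'(z)$. The crucial feature is that the common factor $\psi'(s)$ cancels in each ratio, leaving
\begin{equation*}
\frac{f_x}{f_y} = \frac{\phi_1'(x)}{\phi_2'(y)}, \qquad \frac{f_y}{f_z} = \frac{\phi_2'(y)}{\phi_3'(z)}, \qquad \frac{f_x}{f_z} = \frac{\phi_1'(x)}{\phi_3'(z)}.
\end{equation*}
Each ratio depends on only two of the three variables: $f_x/f_y$ is independent of $z$, $f_y/f_z$ is independent of $x$, and $f_x/f_z$ is independent of $y$. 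Consequently $\partial_z(f_x/f_y)$, $\partial_x(f_y/f_z)$, and $\partial_y(f_x/f_z)$ all vanish identically on the open box, which is exactly the conclusion needed for the contrapositive.

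A point requiring a little care is the validity of these manipulations: I need $\phi_i'$ and $\psi'$ to be nonvanishing on the box so that the ratios are genuinely defined, but this is guaranteed because the $\phi_i$ are bijective with analytic inverses (hence their derivatives are nonzero by the inverse function theorem), and because $F$ has no vanishing partial derivative, which forces $f_x,f_y,f_z$ to be not identically zero and hence nonzero on a suitable sub-box. I would shrink the open set once more if needed so that all denominators avoid zero, ensuring every division above is legitimate.

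The main obstacle I anticipate is purely bookkeeping rather than conceptual: carefully arranging the open sets so that the local analytic parametrization, the rational expression $f$, and all the nonvanishing conditions on derivatives hold simultaneously on one common non-empty open set, while staying away from $Z_0$ and from the poles and branch loci of $f$ and the $\phi_i$. Once the identity $f = \psi \circ s$ is established on such a set, the derivative cancellation is immediate and the separation-of-variables conclusion follows directly. The overall structure mirrors the three-dimensional degeneracy test, and the slight simplification noted in the text comes precisely from the fact that the single common chain-rule factor $\psi'(s)$ cancels cleanly in each pairwise ratio.
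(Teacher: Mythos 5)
Your proposal is correct and takes essentially the same route as the paper's proof: both localize via the degenerate form from \Cref{thm:4dElekesSzabo} to write $f(x,y,z)=\psi\bigl(\phi_1(x)+\phi_2(y)+\phi_3(z)\bigr)$ (the paper absorbs your minus sign into $\psi(t)=\phi_4^{-1}(-t)$, an immaterial difference) and then cancel the common factor $\psi'$ in the ratios $f_x/f_y$, $f_y/f_z$, $f_x/f_z$ to get separation of variables. Your extra observation that the denominators are automatically nonvanishing because the $\phi_i$ have analytic inverses is sound and if anything slightly cleaner than the paper's step of excising $V(f_y)\cup V(f_z)$.
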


\begin{proof}
    We prove this via contradiction. Suppose that $F(x,y,z,w)$ is a degenerate polynomial, and let $U \subseteq \C^3$ be the Zariski open set where $f(x,y,z)$ is defined. Then for some fixed point $p$ in the set $ \{(x,y,z,w) \in Z(F) \setminus Z_0 : (x,y,z) \in U\}$,  there exist (non-empty analytic) open sets $D_1,D_2,D_3,D_4 \subseteq \mathbb C$ such that $p \in D_1 \times D_2 \times D_3 \times D_4$, chosen small enough so that $D_1 \times D_2 \times D_3 \subseteq U$, and analytic bijective functions with analytic inverses $\phi_i : D_i \rightarrow \mathbb C$, $i=1,2,3,4$, such that on $D_1 \times D_2 \times D_3 \times D_4$, we have
    $$F(x,y,z,w) = 0 \iff \phi_1(x) + \phi_2(y) + \phi_3(z) + \phi_4(w) = 0 \iff w = f(x,y,z).$$
    Defining $\psi(t) = \phi_4^{-1}(-t)$, we see that we have $f(x,y,z) = \psi( \phi_1(x) + \phi_2(y) + \phi_3(z))$ on the open set $D_1 \times D_2 \times D_3$. We then have
    \begin{align*}
            f_x &= \phi_1'(x)\psi'(\phi_1(x) + \phi_2(y) + \phi_3(z)) \\
            f_y &= \phi_2'(y)\psi'(\phi_1(x) + \phi_2(y) + \phi_3(z)) \\
            f_z &= \phi_3'(z)\psi'(\phi_1(x) + \phi_2(y) + \phi_3(z)).
    \end{align*}
    Therefore we have
    $$\frac{f_x}{f_y} = \frac{\phi_1'(x)}{\phi_2'(y)}, \quad \frac{f_y}{f_z} = \frac{\phi_2'(y)}{\phi_3'(z)}, \quad \frac{f_x}{f_z} = \frac{\phi_1'(x)}{\phi_3'(z)}$$
    from which we see that on the open set $(D_1 \times D_2 \times D_3) \setminus (V(f_y) \cup V(f_z)) \subseteq U$, (which is not empty since $V(f_y)$ and $V(f_z)$ are Zariski open subsets of hypersurfaces in $\C^3$ since $F$ has non-vanishing partial derivatives), we must have 
   \begin{equation}\label{degenterms}
       \frac{\partial }{\partial z} \left( \frac{f_x}{f_y} \right) = \frac{\partial }{\partial x} \left( \frac{f_y}{f_z} \right) = \frac{\partial }{\partial y} \left( \frac{f_x}{f_z} \right)= 0.
   \end{equation}
    The lemma then follows.
\end{proof}

\section{Rich points from sheaves of planes}

\subsection{Construction}\label{sec:sheafconstruction}

Before giving the proof of \Cref{thm:sheavesofplanes}, we give a construction showing that certain sets of sheaves of planes can give infinitely many four-rich points - this is important context for the proof of \Cref{thm:sheavesofplanes}. 

\begin{construction}
    There exist four sheaves of planes $\Pi_1,\Pi_2,\Pi_3,\Pi_4$, each of size $n$, whose axial lines are pairwise skew, but define infinitely many four-rich points.
\end{construction}

Let us name the four axial lines $l_1,l_2,l_3,l_4$. Of these four pairwise skew lines, the first three $l_1,l_2,l_3$ define a regulus - recall that this is the union of all lines which intersect all of $l_1,l_2,l_3$, and is a quadratic surface which we will call $R$. There are three possibilities for how the fourth line interacts with this regulus. Firstly, it may be tangential to $R$. Secondly, it could intersect $R$ twice. Lastly, it may be contained in $R$ (and is therefore in the same ruling family of the regulus as $l_1,l_2,l_3$). In any of these cases, there will exist a transversal line of all $l_1,l_2,l_3,l_4$, coming from the second ruling of $R$. A depiction of this line is shown in \Cref{fig:regulus}. The construction is now simple; let $l_5$ be a transversal line of all $l_1,l_2,l_3,l_4$. If we define $\pi_{l_5,l_i}$ to be the plane defined by the two lines $l_5$ and $l_i$ for $i=1,2,3,4$, and we define $\Pi_1,\Pi_2,\Pi_3,\Pi_4$ to be four sheaves of size $n$ with axial lines $l_1,l_2,l_3,l_4$ such that $\pi_{l_5,l_i} \in \Pi_i$, we see that the line $l_5$ is in the intersection of the four sheaves, giving infinitely many four-rich points.

\begin{figure}[h]
    \centering
    \includegraphics[width=0.5\linewidth]{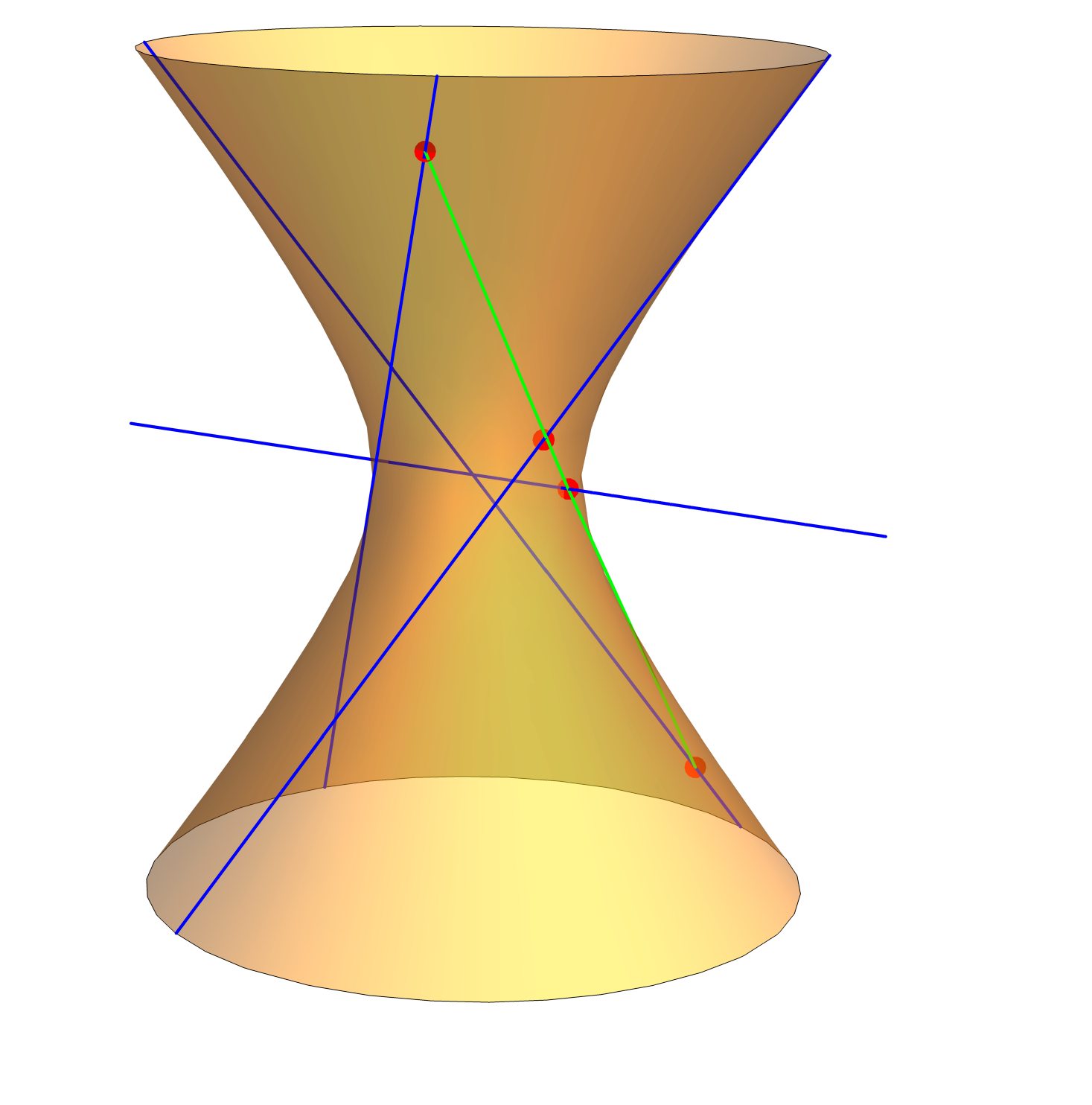}
    \caption{Three skew lines in a regulus, a fourth skew line which intersects the regulus twice, and a green transversal line. The four intersection points are shown in red.}
    \label{fig:regulus}
\end{figure}

\subsection{Proof of \Cref{thm:sheavesofplanes}}

We now give the proof of \Cref{thm:sheavesofplanes}. 

\begin{proof}
    We begin with the four sheaves of planes $\Pi_1,\Pi_2,\Pi_3, \Pi_4$. Let $l_1,l_2,l_3,l_4$ be the corresponding axial lines of these sheaves, which are assumed to be pairwise skew. It is well known that triples of skew lines are projectively equivalent; that is, we may apply a projective transformation $\pi$ such that 
    \begin{align*}
        \pi(l_1) & = Z(x,y) = \left\{[0:0:z:w] \in \mathbb P^3 \right\}\\
         \pi(l_2) & = Z(z,y-w) = \left\{[x:y:0:y] \in \mathbb P^3 \right\}\\
          \pi(l_3) & = Z(x-w,z-w) = \left\{[x:y:x:x] \in \mathbb P^3 \right\}.
    \end{align*}
    We then rename so that $l_1,l_2,l_3,l_4$ refer to the image of the previous lines under $\pi$. Affinely, $l_1$ is given by $x=y=0$, \ $l_2$ is given by $y=1,z=0$, and $l_3$ is given by $x=z=1$. By using the projective transformations which fix the lines $l_1,l_2,l_3$, we can gain further information about the position of $l_4$. Consider the following projective transformations, given in matrix form by 
    $$M_{a,b,c}:= \begin{pmatrix} 1 & b & 0 & 0 \\ a & 1+b-a-c & 0 & 0 \\ 0 & b & 1+b & -b \\ a & b & c & 1-a-c 
        
    \end{pmatrix}$$
    where $a,b,c \in \mathbb C$, and the determinant of the matrix is non-zero. These projective transformations all map the lines $l_1,l_2,l_3$ to themselves. The fourth line $l_4$ exists somewhere in $\mathbb P^3$, and we fix any affine plane (i.e. not the plane at infinity) which contains $l_4$, and does not contain either of the points $[0:0:1:1]$, $[0:0:0:1]$. Note that such a choice always exists since $l_4$ is skew with the $z$ axis $l_1$. We denote this plane by $H$, and write it as $t_1 x + t_2 y + t_3 z + t_4w =0$, for some $t_1,t_2,t_3,t_4 \in \mathbb C$. Since $H$ does not contain the origin we have $t_4 \neq 0$, and so WLOG $t_4 = 1$, and we can write the points on this plane as $$ H = \left\{\left[x:y:z:-t_1 x - t_2 y - t_3 z\right] : [x:y:z] \in \mathbb P^2\right\}.$$
    We now consider the image of this plane under a transformation $M_{a,b,c}$, which are the points in $\mathbb P^3$ of the form
    \begin{equation}\label{eq:imagepoints}
        \begin{pmatrix}
            X \\ Y \\ Z \\ W
        \end{pmatrix} = \begin{pmatrix}
         b y+x \\(b+1) y-y (a+c)+a x \\ b (t_1 x+y (t_2+1)+z (t_3+1))+z \\ b y+(a+c-1) (t_1 x+t_2 y+t_3 z)+a x+c z
    \end{pmatrix}
    \end{equation}
    for $[x:y:z] \in \mathbb P^2.$ Our aim will be to give a transformation $M_{a,b,c}$ such that $M_{a,b,c}(l_4)$ lies in the plane $X+Y+Z =0$ (note that we use capitals for these variables to distinguish them from the points parametrising $H$). This means that we need to find $a,b,c$ such that for all $[x:y:z] \in \mathbb P^2$, the points in \eqref{eq:imagepoints} satisfy $X+Y+Z =0$. We calculate that the sum of the first three coordinates above gives
    $$(1+a+b t_1)x +(1+(3+t_2)b -a-c)y + (1+b(1+t_3))z.$$
    This yields the solution
    \begin{equation}\label{eqn:abcsoln}
        a = \frac{t_1-t_3-1}{t_3+1}, \quad  b = \frac{-1}{t_3 + 1}, \quad c = \frac{-t_1-t_2+2t_3-1}{t_3+1}.
    \end{equation}
    Note that we cannot have $t_3= -1$, since this would imply that $[0:0:1:1]$ lies on $H$, which we have assumed is not the case. Therefore this choice of $a,b,c$ is valid. The determinant of the corresponding matrix $M_{a,b,c}$ is $\frac{(1+t_1+t_2+t_3+t_2t_3)^2}{(1+t_3)^4}$. We will now split into two cases, depending on whether there is some choice of $t_1,t_2,t_3$ such that $\frac{(1+t_1+t_2+t_3+t_2t_3)^2}{(1+t_3)^4} \neq 0$.

    \subsection{Case 1}

    Assume that there exist $t_1,t_2,t_3$ as above, such that $\frac{(1+t_1+t_2+t_3+t_2t_3)^2}{(1+t_3)^4} \neq 0$. In this case, we find a projective transformation $M_{a,b,c}$ with $a,b,c$ as given in \eqref{eqn:abcsoln}, which maps the normalised $l_1,l_2,l_3$ onto themselves, and maps $l_4$ onto the plane $X+Y+Z=0.$ We will now use \Cref{thm:4dElekesSzabo} to bound the number of four-rich points defined by four sheaves of $n$ planes, with the four axial lines $l_1,l_2,l_3,l_4$. Note that projective transformations map planes to planes and lines to lines, and will preserve the number of four rich points. Let us first assume that $l_4$ lies within the affine part of the plane $X + Y + Z = 0$ (that is, $l_4$ does not lie entirely on the plane at infinity). In this case, we can write $l_4$ as the intersection of the plane $X+Y+Z = 0$ and a second, vertical plane $s_1 X + s_2 Y +1 =0$. Note that we can assume the second plane does not pass through the origin.
    
    Consider the first sheaf of planes, $\Pi_1$. These planes all contain the $z$-axis, and any such plane can be described as the zero set of the equation $aX + bY = 0$, for some $[a:b] \in \mathbb P^1$. For our calculations, it will be more convenient to work with planes parametrised by a single complex number, as opposed to $\mathbb P^1$. To do this, we will remove the single plane $Y=0$ from $\Pi_1$ (if it is present in $\Pi_1$), allowing us to write planes in $\Pi_1$ as $X = m_1 Y$ for some complex number $m_1$. By removing any single plane from a sheaf, we are removing at most $O(n^2)$ four-rich points; indeed, for any plane $\pi \in \Pi_1$, the other three sheaves each intersect $\pi$ to give three sets of $n$ lines, call them $L_1,L_2,L_3$ contained in $\pi$. By our assumption on transversal lines, the intersection $L_1 \cap L_2 \cap L_3$ contains no lines, and so has size at most $O(n^2)$. These are precisely the four rich points contained in $\pi$, proving our claim.

    Using the above argument, after removing at most one plane from each sheaf (and therefore removing at most $O(n^2)$ four-rich points, which is far smaller than our claimed bound), we can now write the four sheaves of planes as
    \begin{gather*}
        \Pi_1 = \{ X=m_1 Y : m_1 \in M_1\} \\
        \Pi_2 = \{ Y-1=m_2 Z : m_2 \in M_2\} \\
        \Pi_3 = \{ Z-1=m_3 (X-1) : m_3 \in M_3\} \\
        \Pi_4 = \{ (s_1 + m_4)X + (s_2 + m_4)Y+ m_4 Z + 1=0: m_4 \in M_4\}
    \end{gather*}
    where $M_1,M_2,M_3,M_4 \subseteq \mathbb C$ are finite sets of size $n$. Let $P$ be the set of four rich points. Each point $(x_0,y_0,z_0)\in P$ yields a quadruple $(m_1,m_2,m_3,m_4) \in M_1 \times M_2 \times M_3 \times M_4$ such that the four equations
    \begin{gather}\label{eq:planeequations}
        x_0=m_1 y_0 \\\label{eq:planeequations2}
        y_0-1=m_2 z_0 \\\label{eq:planeequations3}
        z_0-1=m_3 (x_0-1) \\\label{eq:planeequations4}
        (s_1 + m_4)x_0 + (s_2 + m_4)y_0+ m_4 z_0 + 1=0
    \end{gather}
    \sloppy are satisfied. Eliminating $x_0,y_0,z_0$ from this system yields the polynomial equation $F(m_1,m_2,m_3,m_4)=0$, where
    \[\begin{split}
        F(x,y,z,w):=z(xy (w+s_1+1)-xw+y (w+s_2)+w)\\-w (x y+x+y+2)+x (y+1) s_1+y s_2+s_2+1.
    \end{split}\]
    Now consider the projection map
\vspace{-8mm}
    \begin{center}\begin{tikzcd}
        \{ (x_0,y_0,z_0,m_1,m_2,m_3,m_4) \in P \times M_1 \times M_2 \times M_3 \times M_4 :\text{Equations }\eqref{eq:planeequations},\eqref{eq:planeequations2},\eqref{eq:planeequations3},\eqref{eq:planeequations4}\text{ hold} \} \arrow[d, ""] \\
         \{(m_1,m_2,m_3,m_4) \in M_1 \times M_2 \times M_3 \times M_4 : F(m_1,m_2,m_3,m_4) = 0\}.
    \end{tikzcd}
\end{center}
We claim that as long as no transversal line $l$ of $l_1,l_2,l_3,l_4$ is contained in a plane from each of the $\Pi_i$, this projection is a bijection. Indeed, given four planes corresponding to $(m_1,m_2,m_3,m_4)$ in the image set of the projection, these four planes either intersect in a single point, or they intersect in a full line (note that two planes cannot be equal since their axial lines are skew). However we see that if the four planes intersect in a line, then that line is a transversal line to $l_1,l_2,l_3,l_4$. Since we have assumed that this does not happen, the projection above is a bijection. We now aim to upper bound the size of the image set of this projection.

\subsubsection{Irreducibility of $F$}\label{sec:irreducibleF}
In order to prove that $F$ is irreducible, we will use a well know result concerning polynomial resultants. For a proof of the following result, see \cite[Section 3.6, Proposition 1]{coxlittleoshea}. We will freely use this lemma throughout the paper. 

\begin{lemma}
    Suppose that $f,g\in \mathbb C[x,y,z,w]$ are polynomials which have a non-trivial common factor that depends on $x$. Then the resultant $\text{Res}(f,g,x)$ is identically zero.
\end{lemma}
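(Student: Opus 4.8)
The plan is to exhibit an explicit linear dependence among the columns of the Sylvester matrix of $f$ and $g$ in the variable $x$, which forces its determinant --- namely $\text{Res}(f,g,x)$ --- to vanish. First I would reduce to the case where $f$ and $g$ both genuinely involve $x$: since the common factor $h$ depends on $x$ we have $\deg_x h \geq 1$, and as $h \mid f$ and $h \mid g$ this forces $\deg_x f =: n \geq 1$ and $\deg_x g =: m \geq 1$ (so neither $f$ nor $g$ is zero). Throughout I would view $f$ and $g$ as polynomials in the single variable $x$ with coefficients in the integral domain $R := \mathbb{C}[y,z,w]$, so that $\text{Res}(f,g,x)$ is the determinant of the $(n+m)\times(n+m)$ Sylvester matrix and lies in $R$.

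Next I would produce a syzygy. Writing $f = h\,\tilde f$ and $g = h\,\tilde g$ with $\tilde f,\tilde g \in R[x]$ nonzero, and setting $A := \tilde g$ and $B := -\tilde f$, one has
\[ Af + Bg = \tilde g\, h\, \tilde f - \tilde f\, h\, \tilde g = 0, \]
while factoring out $h$ produces the degree drops $\deg_x A = m - \deg_x h \leq m-1$ and $\deg_x B = n - \deg_x h \leq n-1$, with $A$ and $B$ not both zero. The third step is to translate this relation into a statement about the Sylvester matrix. Consider the $R$-linear map $(A,B)\mapsto Af+Bg$ from the module of pairs with $\deg_x A \leq m-1$ and $\deg_x B \leq n-1$ (free of rank $m+n$) to the space of polynomials of $x$-degree at most $m+n-1$ (also of rank $m+n$). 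In the monomial bases this map is represented, up to transpose, by the Sylvester matrix, so its determinant equals $\pm\,\text{Res}(f,g,x)$. The nonzero pair $(A,B)$ found above lies in its kernel, so the map is not injective over the fraction field $K:=\mathbb{C}(y,z,w)$, its determinant vanishes, and therefore $\text{Res}(f,g,x)=0$ in $R=\mathbb{C}[y,z,w]$, as claimed.

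The only point requiring care --- and the main potential obstacle --- is the role of the leading coefficients and the formal degrees: the Sylvester determinant is built from the formal degrees $n=\deg_x f$ and $m=\deg_x g$, so one must verify that these equal the true $x$-degrees, which holds because the leading $x$-coefficients of $f$ and $g$ are nonzero elements of the integral domain $R$. An alternative route that sidesteps the matrix bookkeeping is to pass to the field $K=\mathbb{C}(y,z,w)$, where $K[x]$ is a PID: there the common factor $h$ still has positive $x$-degree, so $f$ and $g$ share a root in $\overline{K}$, and the product formula $\text{Res}(f,g,x)=\mathrm{lc}_x(f)^m\prod_{f(\alpha)=0}g(\alpha)$ shows the resultant vanishes; since $\text{Res}(f,g,x)\in R$, it is then the zero polynomial. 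Either argument gives the result, and since we are handed a genuine common factor in $\mathbb{C}[x,y,z,w]$ no Gauss-lemma transfer between $R[x]$ and $K[x]$ is needed for the direction being proved.
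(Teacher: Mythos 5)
Your proof is correct and is essentially the argument the paper relies on: the paper gives no proof of its own but cites Cox--Little--O'Shea (Section 3.6, Proposition 1), where the same syzygy $\tilde g f - \tilde f g = 0$ with the degree drop $\deg_x h \geq 1$ is used to produce a nontrivial kernel element of the Sylvester matrix over the fraction field $\mathbb{C}(y,z,w)$, forcing the determinant to vanish. Both your main route and your alternative via common roots in $\overline{K}$ are sound, and your care about formal versus true $x$-degrees (leading coefficients being nonzero in the integral domain $\mathbb{C}[y,z,w]$) addresses the one genuine subtlety.
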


We shall now prove that $F$ is irreducible for all possible values of $s_1,s_2$. Indeed, let us suppose that $F$ is reducible, so that there exist $g,h \in \mathbb C[x,y,z,w]$ non-trivial such that $F = gh$. $F$ itself has a term $-2w$, and so WLOG we may assume that $g$ has a term of the form $\lambda w$ for some $\lambda \neq 0$. Furthermore, $F$ has no term with a factor of $w^2$. This implies that $h$ has no $w$ variable appearing at all, that is, $h \in \C[x,y,z]$. 

Now consider the partial derivative $F_w$. We have $F = gh$, and so $F_w = g_w h + g h_w = g_w h$, since $h_w =0$. Therefore $F$ and $F_w$ have the non-trivial common factor $h\in \C[x,y,z]$. For any of the variables $x,y,z$, we can consider $F$ and $F_w$ as univariate polynomials in that variable. Therefore since $F$ and $F_w$ have the common factor $h$ which depends on at least one of these variables, at least one of the resultants $\text{Res}(F,F_w,x),\text{Res}(F,F_w,y),\text{Res}(F,F_w,z)$ must be identically zero. Calculating these resultants with Mathematica, we find
\begin{gather*}
    \text{Res}(F,F_w,x) = (yz-y-1) (s_1 (y (z-1)+z-2)+s_2 (1+y+z-yz)+y z+z+1) \\
    \text{Res}(F,F_w,y) = (z-xz-1) (1 + s_2 (z-1) + x (1 + s_1 (z-1) + z)) \\
    \text{Res}(F,F_w,z) = (1 - x - x y) (1 + s_2 + y + x (s_1 + y)).
\end{gather*}
For any $s_1,s_2$, all of these resultants are non-zero - this can be easily checked by considering coefficients. For instance, if $\text{Res}(F,F_w,z)$ is identically zero, we must have that its second factor is zero. However this second factor always has a term $xy$, and so can never be zero. Similar arguments hold for the other resultants. Therefore, $F$ is an irreducible polynomial for all values of $s_1,s_2$.

\subsubsection{Applying Elekes-Szab\'{o}}
    
We now need to check whether the irreducible polynomial $F(x,y,z,w)$ is a degenerate polynomial. On the non-trivial Zariski-open (and therefore Euclidean open) subset of $\mathbb C^3$ given by $(x y (w+s_1+1)-x w+y (w+s_2)+w) \neq 0$, we can write $Z(F)$ as the graph of the function $z = f(x,y,w)$ where
    $$f(x,y,w) := \frac{-w (x y+x+y+2)+x (y+1) s_1+y s_2+s_2+1}{x y (w+s_1+1)-x w+y (w+s_2)+w}.$$
    We can now use \Cref{lem:degentest4d}. Using Mathematica, we calculate that the derivative $\frac{\partial}{\partial x}\frac{f_y}{f_w}$ is given by the following expression.
    \begin{figure}[h!]
    \centering
    \includegraphics[width=1\linewidth]{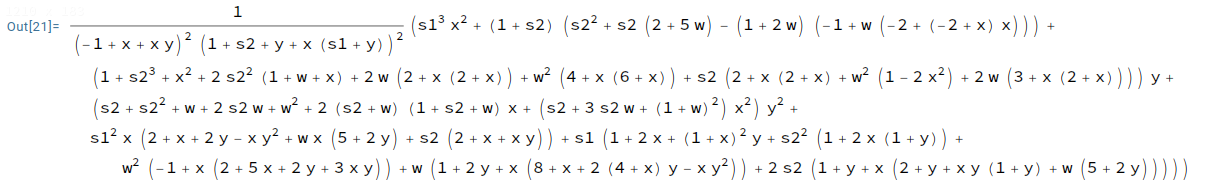}
    \label{fig:enter-label}
    \vspace*{-10mm}
\end{figure}

Using Mathematica, this function is verified to not vanish on any non-trivial open set. Therefore by \Cref{lem:degentest4d} the polynomial $F$ is non-degenerate. Recalling that $P$ denotes the set of four rich points, we then find
$$|P| \leq |Z(F) \cap M_1 \times M_2 \times M_3 \times M_4| \ll n^{8/3}$$
as needed.

\subsubsection{When $l_4$ lies on the plane at infinity}

We now deal with the case where $l_4$ is equal to the line at infinity of the plane $X+Y+Z=0$. If this is the case, the sheaf of planes $\Pi_4$ can be written as (excluding the single plane at infinity, since any single plane can contain only $O(n^2)$ four-rich points under our assumptions)
$$\Pi_4 = \{X+Y+Z=m_4 :\ m_4 \in M_4\}$$
for some finite set $M_4 \subseteq \mathbb C$. Therefore any four rich point $(x_0,y_0,z_0)$ again gives a solution $(m_1,m_2,m_3,m_4) \in M_1\times M_2\times M_3 \times M_4$ to the three equations \eqref{eq:planeequations},\eqref{eq:planeequations2},\eqref{eq:planeequations3}, and the equation
$$x_0+y_0+z_0 = m_4.$$
Following the same procedure as above, we eliminate these four equations down to the single equation
$$F(m_1,m_2,m_3,m_4)=2 + m_2 + m_1 (1 + m_2 + m_3 + m_2 m_3 (m_4 -1)) - (m_3 + m_2 m_3 + m_4)=0.$$
This polynomial $F$ is irreducible, and can be verified as non-degenerate using \Cref{lem:degentest4d}. We do not give the details as the method is identical to that above. Therefore in this case we also find $O(n^{8/3})$ four-rich points.

\subsection{Case 2}

We now consider the case where all planes $t_1 x + t_2 y + t_3 z + 1$ containing $l_4$ satisfy $1+t_1+t_2+t_3+t_2t_3 = 0$, that is, the corresponding matrix $M_{a,b,c}$ with $a,b,c$ as in \eqref{eqn:abcsoln} has determinant zero. We claim that if this is the case, then $l_4$ lies in a common regulus with $l_1,l_2,l_3$. To prove this claim, we use point-plane duality to swap the sheaf of planes containing $l_4$ for a line, call it $L$, in $\mathbb P^3$; specifically the line of points $[t_1:t_2:t_3:1]$ such that the corresponding plane $t_1 x + t_2 y + t_3 z + 1$ contains $l_4$. Note that as we removed the plane containing $l_4$ and the origin, we are missing the one point at infinity of this line. By assumption, this line lies within a doubly ruled surface $R$ given by the equation $1+X + Y + Z + YZ=0$ (in this section we shall use capital letters to distinguish the dual space from the primal space). The equation defining $R$ can be written as $X = -(1+Y)(1+Z)$, and the two ruling families of lines within $R$ can be written as 
\begin{gather*}
\mathcal L_1 := \{l_{\alpha}:\alpha \in \mathbb C\},\quad l_{\alpha} := \{(X,Y,Z) : -(1+Y)=\alpha, \ \alpha(1+Z) = X \}    \\
\mathcal L_2 := \{l_{\beta}':\beta \in \mathbb C\},\quad l_{\beta}' := \{(X,Y,Z) : (1+Z)=\beta, \ -\beta(1+Y) = X \}
\end{gather*}
 We claim that the line $L$ must lie within the family $\mathcal L_1$. Indeed, suppose it lies in $\mathcal L_2$. Then for some $\beta \in \mathbb C$, the sheaf of all planes containing $l_4$ are given by equations of the form
 $$-\beta(1+s)x + sy +(\beta-1)z + 1=0, \quad s \in \mathbb C.$$
    However, all of these planes pass through the point $\left(0,0,\frac{1}{1-\beta}\right)$, implying that $l_4$ must also contain this point, contradicting the skewness of $l_1$ and $l_4$ (recall that after normalisation $l_1$ is the $z$-axis). If we have that $\beta = 1$, the resulting planes all pass through the point $[0:0:1:0]$, yielding the same contradiction. Therefore $L \in \mathcal L_1$.

     Therefore, for some $\alpha \in \mathbb C$ the sheaf of all planes containing $l_4$ are of the form
    \begin{equation} \label{eqn:regulussheaf}
        \alpha(1+s)x - (1+\alpha)y + sz + 1=0, \ \quad s \in \mathbb C.
    \end{equation}
    Excluding the case $\alpha = -1$, the axial line $l_4$ of this sheaf is the set of points 
    $$\left(t,\frac{\alpha t +1}{\alpha + 1},-\alpha t\right), \quad t \in \mathbb C.$$
    The case $\alpha = -1$ corresponds to the axial line $l_4$ being equal to the set of points $(1,y,1)$ for $y \in \mathbb C$, which is the axial line $l_3$ - therefore this case does not happen since $l_3$ and $l_4$ are skew. We now see that all axial lines $l_1,l_2,l_3,l_4$ lie within the regulus given by the equation $xy + xz - yz - x = 0$. Furthermore, the case $\alpha = 0$ can also be excluded, as in this case $l_4$ would be the same as $l_2$.
    We can now continue with the proof as in Case 1. The first three sheaves of planes are the same $\Pi_1,\Pi_2,\Pi_3$ from Case 1, with corresponding sets $M_1,M_2,M_3 \subseteq \mathbb C$. The final sheaf now consists of planes of the form \eqref{eqn:regulussheaf}, with $s \in M_4$ for some finite set $M_4 \subseteq \mathbb C$. As in Case 1, any four-rich point $(x_0,y_0,z_0) \in P$ yields a quadruple $(m_1,m_2,m_3,m_4) \in M_1 \times M_2 \times M_3 \times M_4$ such that the equations
       \begin{gather*}
        x_0=m_1 y_0 \\
        y_0-1=m_2 z_0 \\
        z_0-1=m_3 (x_0-1) \\
        \alpha(1+m_4)x_0 - (1+\alpha)y_0 + m_4 z_0 + 1=0.
    \end{gather*}
    We will again attempt to bound the number of tuples $(x_0,y_0,z_0,m_1,m_2,m_3,m_4)$ by eliminating these four equations down to one equation in $(m_1,m_2,m_3,m_4)$. For this to be valid, we again need the condition that no transversal line of $l_1,l_2,l_3,l_4$ is contained in a plane from each sheaf, so that the analogous projection of that used in Case 1 is also a bijection. If this is the case, we proceed to eliminate down to a single polynomial equation $F(m_1,m_2,m_3,m_4) = 0$, for $F \in \mathbb C[x,y,z,w]$. This polynomial is given by 
    $$F(x,y,z,w) = \alpha (x w+x-1) (y (z-1)-1)+((x-1) z+1) (y-w)$$

\subsubsection{Irreducibility of $F$}
We will use the same method as in \Cref{sec:irreducibleF} to prove that $F$ is irreducible for all $\alpha \in \mathbb C \setminus \{-1,0\}$. As we have seen, these two excluded values of $\alpha$ only occur when $l_4 = l_3$ or $l_4 = l_2$, which is not the case.

Assume that $F$ were reducible, so that $F = gh$ for some non-trivial $g,h \in \mathbb C[x,y,z,w]$. $F$ itself has a term $-w$, and so WLOG we can assume that $g$ has a term of the form $\lambda w$ for some $\lambda \neq 0$. Using the same proof as in \Cref{sec:irreducibleF}, we can see that $h$ must then not contain any occurrence of the variable $w$, so that $h_w = 0$, and therefore we again have that $F$ and $F_w$ have the common factor $h$. Again considering $F$ as univariate in the variables $x,y,z$ in turn, we must have that one of the following resultants is zero.
\begin{gather*}
    \text{Res}(F,F_w,x) = \alpha (1 + \alpha) (1 + y - y z)^2 \\
    \text{Res}(F,F_w,y) = -(1 + \alpha) (1 + (x-1) z)^2 \\
    \text{Res}(F,F_w,z) = -\alpha (xy+x-1)^2.
\end{gather*}
We now see that as long as $\alpha \neq 0,-1$, all of these resultants are non-zero. Therefore $F$ is irreducible for all valid choices of $\alpha$.

\subsubsection{Applying Elekes-Szabo}

We shall now use \Cref{lem:degentest4d} to prove that $F$ is non-degenerate. Indeed, on the Zariski-open set $\mathbb C^3 \setminus Z(\alpha (1 + w) ( y (z-1)-1) + (y-w) z)$, the hypersurface $Z(F)$ can be written as the graph of the function 
$$x = f(y,z,w) = \frac{w - y - \alpha (1 + y) - w z + (1 + \alpha) y z}{\alpha (1 + w) ( y (z-1)-1) + (y-w) z}.$$
We then calculate 
$$\frac{\partial}{\partial y}\left( \frac{f_z}{f_w}\right) = \frac{2 (w-y) (w (z-1)-1)}{\alpha (y (z-1)-1)^3}$$
which does not vanish on any open set (recall that $\alpha \neq 0$). Therefore $F$ is non-degenerate. Applying \Cref{thm:4dElekesSzabo} then shows that the number of four rich points is at most $O(n^{8/3})$, as required. This concludes the proof of \Cref{thm:sheavesofplanes}.

\end{proof}

\section{Rich points from concentric spheres}

In this section we prove \Cref{thm:spheres}, which we restate here for convenience.

\begin{theorem*}
        Let $\mathcal S_1, \mathcal S_2, \mathcal S_3, \mathcal S_4$ be four sets of $n$ concentric spheres in $\mathbb C^3$, whose centres do not lie on a common plane, and no three centres are collinear. Then the number of points in $\mathbb C^3$ which lie in a sphere from each family is $O(n^{8/3})$.
\end{theorem*}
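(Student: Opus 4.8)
The plan is to mirror the strategy used for \Cref{thm:sheavesofplanes}: set up coordinates, encode the four-rich-point condition as a polynomial $F(m_1,m_2,m_3,m_4)=0$ after eliminating the spatial coordinates, prove that $F$ is irreducible and non-degenerate, and then invoke the four-dimensional Elekes--Szab\'{o} theorem (\Cref{thm:4dElekesSzabo}) together with the degeneracy test (\Cref{lem:degentest4d}).

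First I would parametrise each family. A sphere in $\mathcal S_i$ centred at $c_i=(a_i,b_i,d_i)$ has the form $|X-c_i|^2 = r$, where I use the squared radius $r$ as the free parameter ranging over a finite set $R_i \subseteq \C$ with $|R_i|=n$; this is the natural analogue of the slope parameter in the sheaves argument. Expanding, a point $X=(x,y,z)$ lies on such a sphere precisely when $m_i := r_i = |X|^2 - 2\langle X, c_i\rangle + |c_i|^2$. The crucial simplification is that the quadratic term $|X|^2$ is common to all four equations, so subtracting the first equation from the others eliminates it and leaves three \emph{linear} relations of the form $m_i - m_1 = -2\langle X, c_i - c_1\rangle + (|c_i|^2 - |c_1|^2)$ for $i=2,3,4$. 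Because no three centres are collinear and the four centres are not coplanar, the three difference vectors $c_2-c_1, c_3-c_1, c_4-c_1$ are linearly independent, so this linear system determines $X=(x,y,z)$ uniquely as an affine-linear function of $(m_2-m_1,m_3-m_1,m_4-m_1)$. Substituting this expression for $X$ back into the first sphere equation $m_1 = |X|^2 - 2\langle X,c_1\rangle + |c_1|^2$ produces a single polynomial relation $F(m_1,m_2,m_3,m_4)=0$; since $X$ is affine-linear in the $m_i$ and $|X|^2$ is quadratic, $F$ has degree two. I would also apply an affine change of coordinates at the outset (translating and linearly transforming so that, say, $c_1$ is the origin and $c_2,c_3,c_4$ are standard basis vectors) to make $F$ explicit and to put it in a clean normalised form, exactly as was done for the three skew lines.

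With $F$ in hand, the next two steps are to verify the hypotheses of \Cref{thm:4dElekesSzabo}: irreducibility and non-degeneracy. For irreducibility I would argue as in \Cref{sec:irreducibleF}, either by the resultant/common-factor argument or, since $F$ is only quadratic, by directly checking it does not factor into two affine-linear pieces using the non-coplanarity and non-collinearity assumptions (a genuine factorisation would correspond to a degenerate positioning of the centres). For non-degeneracy I would solve $F=0$ for one variable to obtain a rational graph $w=f(x,y,z)$ (as with the sheaves, removing at most one sphere from each family costs only $O(n^2)$ four-rich points, which is absorbed in the bound) and then apply \Cref{lem:degentest4d}, checking that one of the three mixed derivatives such as $\partial_z(f_x/f_y)$ does not vanish on any open set; the non-coplanarity of the centres should be exactly what guarantees this. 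Finally, as in the sheaves proof, I would confirm that the map sending each four-rich point to its quadruple $(m_1,m_2,m_3,m_4)$ is injective onto $Z(F)\cap R_1\times R_2\times R_3\times R_4$ (two distinct four-rich points cannot share all four squared radii, since $X$ is uniquely determined by the radii under our genericity hypotheses), so that bounding $|Z(F)\cap R_1\times R_2\times R_3\times R_4|\ll n^{8/3}$ via \Cref{thm:4dElekesSzabo} bounds the four-rich points.

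I expect the main obstacle to be the non-degeneracy verification, and in particular pinning down exactly which geometric degeneracies of the centre configuration force the polynomial $F$ into case two of \Cref{thm:4dElekesSzabo}. The hypotheses ``no common plane'' and ``no three collinear'' must be used precisely here: the former ensures the linear system for $X$ is invertible (and presumably prevents the additive-decomposition structure), while the latter prevents lower-dimensional collapses. The delicate point is to confirm that these two conditions together are \emph{sufficient} to rule out degeneracy for \emph{every} admissible centre configuration, rather than merely for a generic one, since the computed mixed derivative will depend on the centre coordinates and I must show it is not identically zero for any valid choice; this likely requires a symbolic computation analogous to the Mathematica verifications in Case~1, followed by an argument that the resulting expression vanishes identically only when the centres are coplanar or three are collinear.
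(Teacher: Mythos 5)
Your proposal follows the paper's proof of \Cref{thm:spheres} essentially step for step: parametrise each family by its squared radii, eliminate $x,y,z$ to obtain a single irreducible quadratic $F(t_1,t_2,t_3,t_4)$, check irreducibility via the linear-factor Ansatz (the paper finds $F$ reducible precisely when $be=0$, i.e.\ when three centres are collinear or all four are coplanar after its normalisation), verify non-degeneracy with \Cref{lem:degentest4d}, and apply \Cref{thm:4dElekesSzabo}. Your fibre argument is actually cleaner than the paper's: you note that the three differenced equations determine $(x,y,z)$ as an affine-linear function of the radii because $c_2-c_1$, $c_3-c_1$, $c_4-c_1$ are linearly independent (non-coplanarity alone gives this), so the projection has unique preimages; the paper instead argues geometrically that the four spheres meet in at most two points. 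You also correctly anticipated where the two hypotheses enter the irreducibility and degeneracy computations.

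Two caveats, one of which is a genuine slip. First, your proposed normalisation sending $c_2,c_3,c_4$ to the standard basis vectors is not available: unlike the projective normalisation of three skew lines (projective maps preserve planes), only similarities preserve spheres, and a general affine map turns the $\mathcal S_i$ into ellipsoids. This is exactly why the paper normalises only to $c_1=(0,0,0)$, $c_2=(1,0,0)$, $c_3=(a,b,0)$, $c_4=(c,d,e)$ and carries the parameters through with $b\neq 0$, $e\neq 0$; carried out as you wrote it, your argument would prove the theorem for a single similarity class of centre configurations. The slip is inessential only because your elimination in the first paragraph works for arbitrary centres. Second, your plan to ``solve $F=0$ for one variable to obtain a rational graph'' cannot be executed literally: $F$ is quadratic in \emph{every} variable --- for instance, with centres $0,e_1,e_2,e_3$ one computes $F=(t_1-t_2+1)^2+(t_1-t_3+1)^2+(t_1-t_4+1)^2-4t_1$ --- so no coordinate is a rational function of the other three, and removing a sphere from a family (your proposed remedy, borrowed from the sheaves argument where it serves a different purpose) does not change this. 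The paper's write-up elides the same point when it writes $t_4=f(t_1,t_2,t_3)$, so here you match the paper; to make the step rigorous one should apply the derivative test to a local analytic branch $t_4=f(t_1,t_2,t_3)$ of the quadratic (an expression involving a square root), which is legitimate because the degeneracy conclusion of \Cref{thm:4dElekesSzabo} is itself local-analytic and the proof of \Cref{lem:degentest4d} goes through verbatim for such a branch.
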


Before proving this result, we give a construction showing that an assumption on the centres of the sets of spheres in necessary.

\subsection{Construction}\label{sec:construction}

 We give the following construction of four points which each define only $|P|^{1/3}$ distances to some point set $P$. Note that the construction is equivalent to giving four pencils of $n$ spheres, with three centres collinear, which define $n^2$ four-rich points.

\begin{construction}
    There exists a set of points $q_1,q_2,q_3,q_4 \in \mathbb R^3$, with $q_1,q_2,q_3$ on a common line, and a set of points $P \subseteq \mathbb R^3$, such that $\max_i (|D(q_i,P)|) \ll |P|^{1/3}$.
\end{construction}

\begin{proof}
    The construction begins in two dimensions; a construction of Elekes \cite{elekescircle} gives three collinear points in the plane $\mathbb R^2 \subseteq \mathbb C^2$, together with $n$ circles around each point, such that there are $\Omega(n^2)$ three-rich points. Indeed, the three points can be taken as $(-1,0),(0,0),$ and $(1,0)$. The circles around each point are defined to have radii $\sqrt{i}$, for $i \in [n]$.

    We now extend this construction to $\R^3$, by giving each point a third coordinate with value $0$, and by defining spheres instead of circles (with the same radii as above). Note that intersecting this construction with any plane containing the $x$-axis yields Elekes' construction in the plane. The three families of $n$ spheres yield $\Omega(n^2)$ three-rich \textit{circles}, which can be seen, as any triple point given by Elekes' construction can be fully rotated around the $x$-axis. We can now choose $q_4$ to be placed on the $y$-axis, very far from the origin. By placing $n$ spheres around $q_4$, each sphere intersects each of the $\Omega(n^2)$ three-rich circles, as long as $q_4$ is far enough away and the radii of the spheres are very close to $||q_4||$. This yields a set of $\Omega(n^3)$ four rich points. Letting $P$ be this set of four-rich points, we see that $P$ only defines $n \sim |P|^{1/3}$ distances to each $q_i$, concluding the proof.
\end{proof}

\subsection{Proof of \Cref{thm:spheres}}

We now prove \Cref{thm:spheres}.

\begin{proof}
    
Let $\mathcal S_1, \mathcal S_2, \mathcal S_3, \mathcal S_4$ be the four sets of concentric spheres, each of size $n$. By translating, we can assume that $\mathcal S_1$ is centred at the origin. We can also rotate and dilate so that $\mathcal S_2$ is centred at $(1,0,0)$. Finally, by rotating around the $x$-axis, we can assume that the spheres from the third set  $\mathcal S_3$ are centred at some point $(a,b,0)$, with $b \neq 0$ since no three centres are collinear. The final set $\mathcal S_4$ is now centred at some point $(c,d,e)$ with $e\neq 0$ since the four points are not coplanar.

Suppose that $(x,y,z)$ is a point lying on a sphere from each family. Let us denote the set of such four-rich points by $P_4$. By identifying a sphere from each family by its squared radius (that is, each set $\mathcal S_i$ has a corresponding set $R_i \subseteq \mathbb C$ of its squared radii), we obtain a solution $(x,y,z,t_1,t_2,t_3,t_4) \in P_4 \times R_1 \times R_2 \times R_3 \times R_4$ to the equations
\begin{align*}
  t_1 &=  x^2 + y^2 + z^2 \\
  t_2 &= (x-1)^2 + y^2 + z^2  \\
   t_3 &= (x-a)^2 + (y-b)^2 + z^2  \\
   t_4&= (x-c)^2 + (y-d)^2 + (z-e)^2.
\end{align*}
Let $A$ be the set of these solutions. Using Mathematica, we can eliminate $x,y,z$ from these equations, to obtain a single polynomial equation $F(t_1,t_2,t_3,t_4)=0$. The polynomial $F(x,y,z,w)$ is shown below. Let $B$ be the set of solutions to $F(t_1,t_2,t_3,t_4) =0$ within $R_1\times R_2 \times R_3 \times R_4$.

\begin{figure}[h]
    \centering
    \includegraphics[width=1\linewidth]{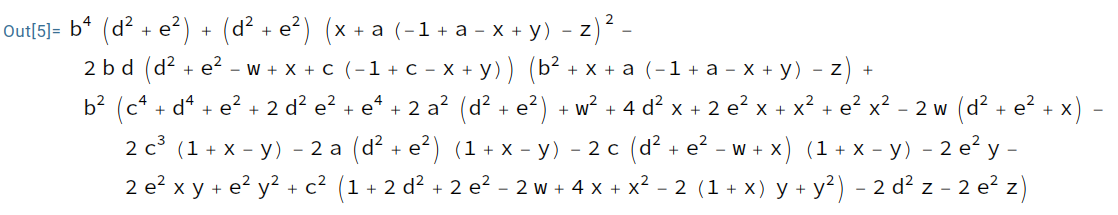}
\end{figure}

As in the proof of \Cref{thm:sheavesofplanes}, we have a corresponding map $\pi:A \rightarrow B$, which is well defined by the reduction above. We claim that any element in the image of $\pi$ has at most four preimages. Indeed, given the quadruple $(t_1,t_2,t_3,t_4)$, the four spheres corresponding to these radii intersect in at most two points; if they were to have a circle in common, the centres would not be in general position.
 
\subsection{Irreducibility of $F$}

We shall now prove that $F$ is irreducible. Assuming that $F$ is in fact reducible, we have that $F = gh$ with $g,h$ non-constant in $\mathbb C[x,y,z,w]$. Since the total degree of $F$ is two, $g$ and $h$ must be linear polynomials. We then must have
$$F = (g_1 x + g_2 y + g_3 z + g_4 w + g_5)(h_1 x + h_2 y + h_3 z + h_4 w + h_5)$$
for some $g_1,...,g_5,h_1,...,h_5 \in \mathbb C$. Using this form as an Ansatz, we can compare coefficients using Mathematica to find that $F$ is reducible if and only if $be = 0$, which is not the case for us. Therefore $F$ is irreducible.

\subsection{Application of Elekes-Szab\'{o}}
We now rewrite $F(t_1,t_2,t_3,t_4)=0$ as $t_4 = f(t_1,t_2,t_3)$ for $(t_1,t_2,t_3)$ on some non-empty Zariski open set. Making use of Mathematica, we can calculate the expression $\frac{\partial}{\partial t_1} \left( \frac{f_{t_2}}{f_{t_3}}\right)$ - it is too long to state here. For $b,e \neq 0$, this expression does not vanish on any open set, and therefore $F(t_1,t_2,t_3,t_4)$ is non-degenerate by \Cref{lem:degentest4d}. By \Cref{thm:4dElekesSzabo}, we then have
$$|P_4| \leq |Z(F) \cap R_1\times R_2 \times R_3 \times R_4| \ll n^{8/3}$$
which concludes the proof.
\end{proof}
\subsection{Pinned distance corollary}
\Cref{thm:distances} can now be easily deduced from  \Cref{thm:spheres}. Indeed, given the finite set $P$, we can cover $P$ with spheres around each $q_1,q_2,q_3,q_4$. One of these covering sets of spheres has the largest size; we add arbitrary spheres to the other sets until they all have this maximum size. Since $P$ is a subset of the four-rich points defined by these spheres, \Cref{thm:spheres} then implies
$$|P| \ll \max_i(|D(q_i,P)|)^{8/3}$$
and the result follows.

Note that it would not be hard to follow the proof of \Cref{thm:spheres} with the sets of spheres being of different sizes, to obtain an `unbalanced' version. We do not pursue this here, in order to simplify the calculations.

\section{Pencils in $\mathbb C^2$}

In this section we prove an upper bound on the number of four rich points determined by two pencils of lines and two pencils of concentric circles within $\C^2$. This follows the work on four-rich points determined by four pencils of $n$ lines in \cite{RocheNewton2018ImprovedBF}. Assuming all the pencils have size $n$, we will use the Elekes-Szab\'{o} Theorem (Theorem \ref{thm:SolymosiZahl}) to obtain an upper bound of $O(n^{12/7})$. We note that using the same proof as below, one can prove many variants, concerning the number of four-rich points defined by different configurations of circles and lines. We prove the following result.

\begin{wrapstuff}[type=figure, height=.13\textheight]
  \includegraphics[scale=0.592]{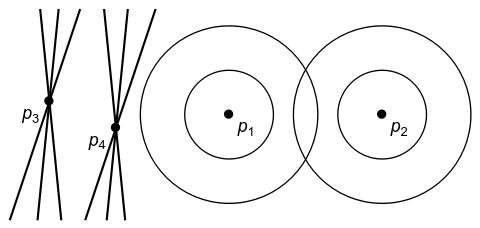}
\end{wrapstuff}

\begin{theorem} Let $\mathcal{C}_1,\mathcal{C}_2$ be two pencils of $n$ concentric circles with distinct centres and non-zero radii, and $\cL_1,\cL_2$ be two pencils of $n$ concurrent lines with distinct centres in the plane $\C^2$. Then the number of points that lie on a curve from each of the four families is $O(n^{12/7})$.\label{thm:2l-2con}
\end{theorem}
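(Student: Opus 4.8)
The plan is to mirror the elimination strategy used for \Cref{thm:sheavesofplanes} and \Cref{thm:spheres}, but to reduce to a relation in \emph{three} variables, so that the three-dimensional Elekes--Szab\'o theorem (\Cref{thm:SolymosiZahl}) applies; this is exactly what produces the exponent $12/7$. First I would normalise the configuration by a similarity of $\C^2$ (translation, rotation, scaling), placing the centre of $\cL_1$ at the origin and fixing the centre of $\cL_2$, so that the two line-pencils are parametrised by a single slope $m_1 \in M_1$ and $m_2 \in M_2$, while the two circle-pencils are parametrised by their squared radii $t_1 \in R_1$ and $t_2 \in R_2$, with centres at generic positions $(a_1,b_1),(a_2,b_2)$. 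The hypotheses of distinct line-centres and distinct circle-centres guarantee these normalisations are available. As in the earlier proofs, removing the $O(1)$ exceptional curves needed to parametrise each pencil by a single affine coordinate discards only $O(n)$ four-rich points, since any single line or circle contains at most $O(n)$ four-rich points (it meets the $n$ curves of any other family in $O(n)$ points); this is negligible against the target bound.

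The key step is the elimination. A four-rich point lies on one line from each $\cL_i$ and one circle from each $\cC_i$, giving four equations: two linear in $(x,y)$ carrying the parameters $m_1,m_2$, and two of the form $t_i = (x-a_i)^2+(y-b_i)^2$. Solving the two linear equations expresses $(x,y)$ as a rational function of $(m_1,m_2)$; substituting into the two circle equations yields $t_1 = \Phi_1(m_1,m_2)$ and $t_2 = \Phi_2(m_1,m_2)$. Taking the resultant of $\Phi_1 - t_1$ and $\Phi_2 - t_2$ with respect to $m_2$ eliminates $m_2$ and produces a polynomial relation in the three surviving parameters; let $F(m_1,t_1,t_2)$ denote the relevant factor living in $M_1 \times R_1 \times R_2$. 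I would then argue, exactly as via the projection diagram in Case 1, that the map sending a four-rich point to $(m_1,t_1,t_2)$ has fibres of bounded size: given such a triple, the line from $\cL_1$ and the circle from $\cC_1$ meet in at most two points, and each determines the remaining data, so that $|P| \ll |Z(F) \cap M_1 \times R_1 \times R_2|$.

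It then remains to verify the hypotheses of \Cref{thm:SolymosiZahl} for $F$. Irreducibility I would establish by the resultant/Ansatz method of \Cref{sec:irreducibleF}, checking in Mathematica that the resultants of $F$ with its partial derivatives are non-zero under the distinctness hypotheses; I expect a factor analogous to the $be$ appearing in the sphere case to record precisely the conditions that the centres are distinct (and that the circle-centres are not degenerately aligned with the line-centres). Non-degeneracy I would test with \Cref{lem:degentest2d}: writing $Z(F)$ locally as $t_2 = f(m_1,t_1)$, I would compute $\partial^2(\ln|f_{m_1}/f_{t_1}|)/\partial m_1 \partial t_1$ and confirm it does not vanish on any non-empty open set. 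I expect this final non-degeneracy computation to be the main obstacle, both because the expressions are large and because one must confirm that the geometric hypotheses (distinct circle-centres, distinct line-centres, non-zero radii) are exactly what rules out the additive-form degeneracy. Granting non-degeneracy, \Cref{thm:SolymosiZahl} gives $|P| \ll n^{12/7}$, completing the proof.
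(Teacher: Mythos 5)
Your overall strategy is exactly the paper's: normalise by a similarity, parametrise the pencils by slopes and squared radii, eliminate down to a trivariate polynomial $F$, check that the projection from four-rich points has bounded fibres, prove irreducibility by the resultant method of \Cref{sec:irreducibleF}, verify non-degeneracy by a derivative test, and invoke \Cref{thm:SolymosiZahl} for the exponent $12/7$. The one substantive divergence is \emph{which} three parameters you keep, and this creates a genuine obstacle in your final step. You eliminate the second slope $m_2$ and retain $(m_1, t_1, t_2)$, i.e.\ one slope and both squared radii. But with these coordinates $F$ is quadratic in \emph{every} variable: geometrically, given $(t_1,t_2)$ the two circles meet in two points (two slopes $m_1$), given $(m_1,t_1)$ the line and first circle meet in two points (two values of $t_2$), and so on. Since $F$ is irreducible, none of these quadratics has a discriminant that is a perfect square, so there is no \emph{rational} function $f$ with $t_2 = f(m_1,t_1) \iff F = 0$. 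Your plan to ``write $Z(F)$ locally as $t_2 = f(m_1,t_1)$'' and apply \Cref{lem:degentest2d} therefore cannot be executed as stated: that lemma, as formulated in the paper, requires a rational graph presentation, and the local solutions here involve square roots. (An analytic version of the derivative test exists in the literature, e.g.\ \cite[Lemma 33]{ElekesSzabo}, so the gap is repairable, but not with the paper's stated toolkit.)

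The paper avoids this by making the opposite choice: it eliminates $x,y$ \emph{and the second radius} $t_2$, retaining $(t_1,t_3,t_4)$ --- one squared radius and \emph{both} slopes. This is the better coordinate system for two reasons. First, two non-parallel lines meet in a unique point, so $(t_3,t_4)$ determine $(x,y)$ rationally; substituting into $x^2+y^2=t_1$ makes $F$ \emph{linear} in $t_1$, so the rational solve $t_1 = f(t_3,t_4)$ needed for \Cref{lem:degentest2d} is immediate. Second, the same fact gives the projection \emph{unique} preimages (the paper's Claim), rather than your fibre bound of $2$ --- though a bounded fibre of $2$ would of course suffice for the counting. Your remaining steps are sound and match the paper: removing vertical lines and the line $p_3p_4$ costs only $O(n)$ points per removed curve; the resultants $\mathrm{Res}(F,F_{t_1},t_3)$ and $\mathrm{Res}(F,F_{t_1},t_4)$ in the paper vanish only when the line-centres coincide, confirming your expectation that the distinctness hypotheses surface in the irreducibility check; and the degeneracy test in the paper's coordinates indeed fails to vanish on any open set under $(a,b)\neq(c,d)$. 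So: right method, but your elimination choice should be swapped (eliminate $t_2$, not $m_2$) to make the proof go through with the lemmas as stated.
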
   
    \begin{proof}
    Identify each circle of $\cC_1,\cC_2$ with its square radius, coming from corresponding sets $R_1,R_2\subseteq\C^*$ respectively, and each line of $\cL_1,\cL_2$ with its slope, coming from the sets $S_1,S_2\subseteq \C$, where all sets have size $n$. We remove any vertical lines, since any single line has at most $2n$ four rich points lying on it, far fewer than we claim. Furthermore, if the line connecting $p_1$ and $p_2$ is in either pencil of lines, we remove it also. Without loss of generality, by rotations, translations and dilations we can assume that the circles in $\cC_1$ are centred at the origin $p_1=(0,0)$ and those in $\cC_2$ have centre $p_2=(1,0)$, and let the centres of $\cL_1,\cL_2$ be at some points $p_3=(a,b),\;p_4=(c,d)$. Let $P_4$ be the set of four rich points. For each four rich point $(x,y) \in P_4$, there exist $(t_1,t_2,t_3,t_4) \in R_1 \times R_2 \times S_1 \times S_2$ such that we have a solution $(x,y,t_1,t_2,t_3,t_4)\in P_4\times R_1\times R_2\times S_1\times S_2$ to the equations
        \begin{gather*}
            x^2+y^2=t_1\\
            (x-1)^2+y^2=t_2\\
            y-b=t_3(x-a)\\
            y-d=t_4(x-c).
        \end{gather*}
Let us call the set of these solutions $A$. Using Mathematica, we can eliminate $x,y,t_2$ from this system of equations and obtain a single expression $F(t_1,t_3,t_4)=0,$ where $F$ is the polynomial\begin{multline*}
    t_3^2 \left(a^2+d^2\right)+t_4^2 \left((t_3 (c-a)+b)^2+c^2-t_1\right)+2 t_3 t_4 (a d t_3+t_1)\\+2 a d t_3+b^2+d^2-2 b (a t_3-c t_4+d t_3 t_4+d)-2 c t_4 \left(a t_3+d t_3^2+d\right)-t_1 t_3^2.
\end{multline*}
Now let $B$ be the set of solutions $(t_1,t_3,t_4) \in R_1 \times S_1 \times S_2$ to $F(t_1,t_3,t_4)=0$, that is,
$$B = Z(F) \cap R_1 \times S_1 \times S_2.$$
 The above reduction shows that we have a map $\pi: A \rightarrow B$ given by $\pi(x,y,t_1,t_2,t_3,t_4)= (t_1,t_3,t_4)$.

\textbf{Claim: } For each $(t_1,t_3,t_4) \in \pi(A) \subseteq B$, there is a unique pre-image under $\pi$.

\vspace{-3mm}

\begin{proof}[Proof of Claim]
Firstly note that if $t_3 = t_4$, then the two lines through $p_3$ and $p_4$ are parallel. They cannot be the same line, since we removed this line if it was present. Therefore the two lines do not intersect, and we find no pre-image.

Now suppose that $t_3 \neq t_4$, so that the two lines through $p_3$ and $p_4$ are not parallel. They therefore have a unique intersection point $(x,y)$, which must be the four-rich point in the preimage. Wherever $p_2$ lies, the entry $t_2$ in the preimage must be the squared distance between $(x,y)$ and $p_2$, which is uniquely determined by $(x,y)$. Therefore we find a unique preimage.
\end{proof}
Given the claim, we have that $|P_4| \leq |A| \leq  |B|$, and so we now aim to upper bound $|B|$ by applying \Cref{thm:SolymosiZahl} to $F$.

We check the irreducibility of $F$ using the method demonstrated in \Cref{sec:irreducibleF}. Assume for contradiction that $F=gh$ for some non-constant polynomials $g,h\in\C[t_1,t_3,t_4].$ Observe that a term involving $t_1$ appears in $F$, but no term involves $t_1^2.$ Hence, without loss of generality, suppose that $g$ contains a non-zero term involving $t_1$, while $h$ does not. The partial derivative of $F$ with respect to $t_1,$ $F_{t_1}=g_{t_1}h$, has the common factor $h$ with $F.$ Consequently, at least one of $Res(F,F_{t_1},t_3)$ and $Res(F,F_{t_1},t_4)$ must be zero. Using Mathematica, we obtain\begin{gather*}
    Res(F,F_{t_1},t_3)=(b - d + (c-a) t_4)^4 (1 + t_4^2)^2\\
    Res(F,F_{t_1},t_4)=(b - d + (c-a) t_3)^4 (1 + t_3^2)^2.
\end{gather*} Both resultants are zero precisely when $a=c$ and $b=d$, which cannot happen due to our condition that the centres of $\cL_1$ and $\cL_2$ are distinct, yielding that $F$ is irreducible.\newline
Finally, solving $F=0$ with respect to $t_1$ yields the equation\begin{multline*}
    t_1=\frac{1}{(t_3-t_4)^2}\Big(a^2 t_3^2t_4^2+a^2 t_3^2-2 a b t_3 t_4^2-2 a b t_3-2 a c t_3^2 t_4^2-2 a c t_3 t_4+2 a dt_3^2 t_4\\+2 a d t_3+b^2 t_4^2+b^2+2 b c t_3 t_4^2+2 b c t_4-2 b d t_3 t_4-2 b d\\+c^2 t_3^2 t_4^2+c^2 t_4^2-2 c d t_3^2 t_4-2 c d t_4+d^2t_3^2+d^2\Big),
\end{multline*}where on the right side we have a rational function to which we apply \Cref{lem:degentest2d}. Under the condition that $(a,b) \neq (c,d)$ the resulting expression does not vanish on any open set, so $F$ is not degenerate. By \Cref{thm:SolymosiZahl} we conclude that \[|P_4| \leq |A| \leq |B| \leq 2|V(F)\cap R_1\times S_1\times S_2|\ll n^{12/7}.\]
    \end{proof}

\section*{Acknowledgments}
The authors were supported by FWF project PAT2559123. We thank Oliver 
Roche-Newton, Jakob F\"{u}hrer, and Paul Hametner for useful discussions.

\vspace{4mm}

\footnotesize{ \noindent \texttt{Michalis Kokkinos
\\Institute for Algebra
\\Johannes Kepler University, Linz, Austria
\\ 	michalis.kokkinos(at)jku.at
\\
\\Audie Warren
\\Johann Radon Institute for Computational and Applied Mathematics (RICAM)
\\Austrian Academy of Sciences, Linz, Austria
\\ audie.warren(at)oeaw.ac.at}}

\normalsize

\bibliography{sheavespaper}
\bibliographystyle{alphaurl}

\end{document}